\newtheorem{theorem}{Theorem}[section]
\newtheorem{proposition}[theorem]{Proposition}
\newtheorem{lemma}[theorem]{Lemma}
\newtheorem{corollary}[theorem]{Corollary}
\theoremstyle{definition}
\newtheorem{definition}[theorem]{Definition}
\newtheorem{example}[theorem]{Example}
\newtheorem{conjecture}[theorem]{Conjecture}
\theoremstyle{remark}
\numberwithin{equation}{section}
\newcommand{\Z}{\mathbb{Z}}
\newcommand{\R}{\mathbb{R}}
\newcommand{\C}{\mathbb{C}}
\newcommand{\ind}{\operatorname{ind}}
\newcommand{\Zero}{\operatorname{Zero}}
\title[Vector fields on non-compact manifolds]{Vector fields on non-compact manifolds}
\author[T. Kato]{Tsuyoshi Kato}
\address{Department of Mathematics, Kyoto University, Kyoto 606-8502, Japan}
\email{tkato@math.kyoto-u.ac.jp}
\author[D. Kishimoto]{Daisuke Kishimoto}
\address{Faculty of Mathematics, Kyushu University, Fukuoka 819-0395, Japan}
\email{kishimoto@math.kyushu-u.ac.jp}
\author[M. Tsutaya]{Mitsunobu Tsutaya}
\address{Faculty of Mathematics, Kyushu University, Fukuoka 819-0395, Japan}
\email{tsutaya@math.kyushu-u.ac.jp}
\date{\today}
\subjclass[2010]{57R25, 58K45}
\keywords{vector field, non-compact manifold, bounded cohomology, Poincar\'e-Hopf theorem}
\begin{document}

\maketitle

\begin{abstract}
  Let $M$ be a non-compact connected manifold with a cocompact and properly discontinuous action of a discrete group $G$. We establish a Poincar\'{e}-Hopf theorem for a bounded vector field on $M$ satisfying a mild condition on zeros. As an application, we show that such a vector field must have infinitely many zeros whenever $G$ is amenable and the Euler characteristic of $M/G$ is non-zero.
\end{abstract}


\section{Introduction}\label{Introduction}


Let $M$ be a non-compact connected manifold. Then $M$ admits a non-vanishing vector field as $M$ admits a vector field with isolated zeros which can be swept out to infinity. However, the resulting non-vanishing vector field is not satisfactory because it may not be bounded in the following sense. A vector field $v$ on a Riemannian manifold is \emph{bounded} if both $|v|$ and $|dv|$ are bounded, where $dv$ denotes the derivative of $v$. Note that our boundedness condition is different from the one in \cite{CL} and its related works. Bounded vector fields appear in the study of manifolds of bounded geometry. Now we ask whether or not a non-vanishing bounded vector field exists on $M$. Weinberger \cite[Theorem 1]{W} proved that a manifold $M$ of bounded geometry has a non-vanishing vector field $v$ with $|v|$ constant and $|dv|$ bounded if and only if the Euler class of $M$ in the bounded de Rham cohomology $\widehat{H}^*(M)$ is trivial. As one may think of the Poincar\'e-Hopf theorem as a refinement of the Euler class criterion for the existence of a non-vanishing vector field on a compact orientable manifold, it is natural to ask whether or not one can establish the Poincar\'e-Hopf theorem for bounded vector fields on non-compact manifolds of bounded geometry.

A typical manifold of bounded geometry is a covering space of a compact manifold, which we equip with a lift of a metric on the base compact manifold. We say that an action of a group $G$ on a space $X$ is properly discontinuous if every point $x\in X$ has a neighborhood $U$ such that $gU\cap U\ne\emptyset$ implies $g=1$. Equivalently, the quotient map $X\to X/G$ is a covering. So we consider a connected non-compact manifold $M$ on which a cocompact and properly discontinuous action of a group $G$ is given, and will establish the Poincar\'e-Hopf theorem for a bounded vector field on $M$. To state it, we set notation. Let $\ell^\infty(G)$ denote the vector space of bounded functions $G\to\R$, and let $G$ act on $\ell^\infty(G)$ from the left by
\[
  (g\phi)(h)=\phi(hg)
\]
for $g,h\in G$ and $\phi\in\ell^\infty(G)$. We define the module of coinvariants of $\ell^\infty(G)$ by
\[
  \ell^\infty(G)_G=\ell^\infty(G)/\langle\phi-g\phi\mid\phi\in\ell^\infty(G),\,g\in G\rangle
\]
where $\langle S\rangle$ denotes the vector subspace of $\ell^\infty(G)$ generated by a subset $S\subset\ell^\infty(G)$. Let $\mathbbm{1}\in\ell^\infty(G)$ denote the constant function with value $1$. Let $D\subset M$ be a fundamental domain (its definition is given in Section \ref{Fundamental domain}). We will define the index $\mathrm{ind}(v)$ of a bounded vector field $v$ on $M$ as an element of $\ell^\infty(G)_G$, and will prove
\[
  \ind(v)(g)=\sum_{x\in\mathrm{Zero}(v)\cap gD}\mathrm{ind}_x(v)
\]
whenever $v$ is strongly tame, which is a mild condition on the zeros of $v$ defined in Definition \ref{strongly tame}, where $\ind_x(v)$ denotes the local index of a vector field $v$ at the zero $x\in\Zero(v)$. Here, the above equality means that there is a representative $\phi\in\ell^\infty(G)$ of $\ind(v)\in\ell^\infty(G)_G$ such that $\phi(g)$ is the right hand side of the equality. Now we are ready to state the Poincar\'e-Hopf theorem for a bounded vector field on $M$.

\begin{theorem}
  \label{Poincare-Hopf}
  Let $M$ be a non-compact connected manifold equipped with a cocompact and properly discontinuous action of a group $G$ such that $M/G$ is orientable. If a vector field $v$ on $M$ is strongly tame and bounded, then we have
  \[
    \mathrm{ind}(v)=\chi(M/G)\mathbbm{1}\quad\text{in}\quad\ell^\infty(G)_G.
  \]
\end{theorem}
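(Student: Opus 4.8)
The plan is to mimic the classical proof of the Poincaré-Hopf theorem — which reduces the global index statement to a local computation via a handle decomposition or a suitable gradient-like vector field — but carried out $G$-equivariantly, so that the counting happens inside a fundamental domain $D$ and the bookkeeping lands in $\ell^\infty(G)_G$. The key point is that both sides of the desired equality are additive invariants: $\operatorname{ind}(v)$ will have been shown (in the sections preceding the theorem) to be well-defined on strongly tame bounded vector fields and invariant under an appropriate notion of bounded homotopy through strongly tame bounded vector fields. So the first step is to reduce to a convenient model vector field. Since $M$ is a cover of the compact manifold $M/G$, I would take a Morse function $f$ on $M/G$, pull it back to a $G$-invariant Morse function $\tilde f$ on $M$, and consider $v_0 = \operatorname{grad}\tilde f$ (with respect to the lifted metric). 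This $v_0$ is automatically bounded because it is the lift of a smooth vector field on the compact manifold $M/G$; its zeros are the lifts of the critical points of $f$, hence discrete, non-degenerate, and "uniformly isolated", so $v_0$ is strongly tame. The local index at a lifted critical point of index $k$ is $(-1)^k$.

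Next I would prove that $\operatorname{ind}(v) = \operatorname{ind}(v_0)$ in $\ell^\infty(G)_G$ for \emph{every} strongly tame bounded $v$. The honest obstacle is that $v$ and $v_0$ need not be homotopic through bounded strongly tame vector fields in any naive sense — bounded homotopy is a delicate condition when zeros can escape to infinity. I expect this to be the main difficulty, and I would handle it exactly as the classical compact proof handles the independence of the index on the vector field: the sphere bundle argument. That is, outside a uniformly bounded neighborhood of $\operatorname{Zero}(v)\cup\operatorname{Zero}(v_0)$ both vector fields are nonzero with $|v|,|v_0|$ bounded below, and the "relative index" of $v$ against $v_0$ can be expressed via a bounded map to spheres; because the action is cocompact this map descends, up to the coinvariants quotient, to data on the compact manifold $M/G$, where it must vanish (the relevant obstruction living in $H^n(M/G;\pi_{n-1}(S^{n-1}))$ coincides for homotopic sections and here both sections come from the \emph{same} underlying tangent bundle $T(M/G)$). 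Equivalently, and perhaps more cleanly: one shows directly that the class $\operatorname{ind}(v)\in\ell^\infty(G)_G$ depends only on the homotopy class of $v$ as a section of the unit sphere bundle over the complement of a bounded set, and this complement, being $G$-cocompact, forces the invariant to be pinned down by the corresponding invariant on $M/G$.

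Then comes the local computation for $v_0$. Here I would use the hypothesis that $M/G$ is orientable (so $M$ is orientable and the local indices $\operatorname{ind}_x(v_0)$ are unambiguously signed) and that $D$ is a fundamental domain meeting each $G$-orbit essentially once. For each $g\in G$ the zeros of $v_0$ lying in $gD$ are precisely $g$ times the zeros lying in $D$, i.e. $g$ times a copy of the critical points of $f$ on $M/G$; since $v_0$ is a lift, $\operatorname{ind}_{gx}(v_0) = \operatorname{ind}_x(v_0)$, so
\[
  \sum_{x\in\operatorname{Zero}(v_0)\cap gD}\operatorname{ind}_x(v_0) \;=\; \sum_{\text{crit. pts } p \text{ of } f}(-1)^{\operatorname{index}(p)} \;=\; \chi(M/G),
\]
independent of $g$. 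Hence the representative $\phi$ of $\operatorname{ind}(v_0)$ produced by the displayed formula from the introduction is the constant function $\chi(M/G)\mathbbm 1$, which gives $\operatorname{ind}(v_0) = \chi(M/G)\mathbbm 1$ in $\ell^\infty(G)_G$. Combining with $\operatorname{ind}(v) = \operatorname{ind}(v_0)$ finishes the proof. The only genuinely non-routine piece, as noted, is the invariance step: making precise the sense in which the coinvariant-valued index is a bounded-homotopy (equivalently, sphere-bundle-section) invariant, and verifying that cocompactness lets one transfer the vanishing of the relative obstruction from $M/G$ back up to $M$; everything else is either the standard Poincaré-Hopf local model or direct manipulation inside a single fundamental domain.
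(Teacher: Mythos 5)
Your outline founders exactly where you predicted it would: the invariance step $\operatorname{ind}(v)=\operatorname{ind}(v_0)$. The sketch you give for it --- a relative sphere-bundle obstruction that ``descends'' to $M/G$ and lives in $H^n(M/G;\pi_{n-1}(S^{n-1}))$, where it ``must vanish'' because both sections come from the same bundle --- does not work as stated, because an arbitrary strongly tame bounded vector field $v$ on $M$ is \emph{not} $G$-equivariant and does not descend to $M/G$ at all. The difference obstruction between $v$ and a lifted gradient field lives on the non-compact manifold $M$, where it is a priori an infinite amount of data; organizing that data per fundamental domain and showing it is trivial \emph{modulo the coinvariants relation} is essentially the content of the theorem, so your argument is circular at its core. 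Moreover, by insisting on interpreting $\operatorname{ind}(v)$ as the local index count from the start, you create the (real, and in your framework unresolved) problem of homotoping $v$ to the model field through strongly tame fields without zeros escaping control.

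The paper avoids all of this by defining $\operatorname{ind}(v)$ cohomologically: it is $\int_M v^*(\pi^*\Phi)\in\ell^\infty(G)_G$, where $\Phi$ represents the Thom class of $T(M/G)$ and $\int_M$ is the fundamental-domain-wise integral $\widehat\Omega^n(M)\to\ell^\infty(G)$ followed by the quotient to coinvariants. Once a Stokes theorem is proved for this integral (Proposition \ref{Stokes}, which is where the coinvariants relation $\phi-s\phi$ actually enters, via matching facets of adjacent translates of $D$), homotopy invariance holds for \emph{any} bounded homotopy --- no tameness needed --- and the straight-line homotopy $tv$ from the zero section to $v$ immediately gives $\operatorname{ind}(v)=\operatorname{ind}(v_0)$ with $v_0$ the zero section (Proposition \ref{index}). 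The evaluation $\int_M e(M)=\chi(M/G)\mathbbm 1$ is then a one-line push-down of $v_0^*(\pi^*\Phi)$ to $M/G$ on each $gD$ (Proposition \ref{Euler}); no Morse function is required. Strong tameness is used only afterwards, in Proposition \ref{local-to-global}, to identify $\operatorname{ind}(v)(g)$ with $\sum_{x\in\Zero(v)\cap gD}\operatorname{ind}_x(v)$ by shrinking the support of $\Phi$ and applying the local Lemma \ref{local index}. Your final computation for the lifted Morse gradient is correct but is doing work the Thom-form formalism renders unnecessary; the missing idea in your proposal is the coinvariants-valued integral together with its Stokes theorem, which converts the hard invariance step into a triviality.
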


As an application of Theorem \ref{Poincare-Hopf}, we will prove:

\begin{theorem}
  \label{zeros}
  Let $M$ and $G$ be as in Theorem \ref{Poincare-Hopf}. If $G$ is amenable and $\chi(M/G)\ne 0$, then every tame bounded vector field on $M$ must have infinitely many zeros.
\end{theorem}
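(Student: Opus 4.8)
The plan is to argue by contradiction, so suppose $v$ is a tame bounded vector field on $M$ whose zero set $\Zero(v)$ is finite. Note first that $G$ must be infinite: otherwise $M$ would be a finite covering of the compact manifold $M/G$ and hence compact, contrary to hypothesis. Next, since $\Zero(v)$ is finite, one can arrange that $v$ is strongly tame in the sense of Definition \ref{strongly tame}: either a tame bounded vector field with finitely many zeros already meets the conditions there, or else a local modification of $v$ inside disjoint compact neighborhoods of its zeros produces such a vector field, and since this modification is supported in a compact set it alters neither the bounds on $|v|$ and $|dv|$ nor the finiteness of the zero set. Thus we may assume $v$ is strongly tame and bounded with $\Zero(v)$ finite.

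By Theorem \ref{Poincare-Hopf} we then have $\ind(v)=\chi(M/G)\mathbbm 1$ in $\ell^\infty(G)_G$. On the other hand, as recalled in the introduction, $\ind(v)$ is represented by the function $\phi\in\ell^\infty(G)$ given by $\phi(g)=\sum_{x\in\Zero(v)\cap gD}\ind_x(v)$. Because the translates $\{gD\}_{g\in G}$ cover $M$ with pairwise disjoint interiors and $\Zero(v)$ is finite, only finitely many $g\in G$ satisfy $\Zero(v)\cap gD\ne\emptyset$; hence $\phi$ is finitely supported.

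Now I would invoke amenability. Since $G$ is amenable there is a left-invariant mean $m\colon\ell^\infty(G)\to\R$, i.e.\ a positive linear functional with $m(\mathbbm 1)=1$ and $m(g\psi)=m(\psi)$ for all $\psi$ and $g$. Invariance gives $m(\psi-g\psi)=0$, so $m$ descends to a linear map $\bar m\colon\ell^\infty(G)_G\to\R$ with $\bar m(\mathbbm 1)=1$. For $g\in G$ let $\delta_g\in\ell^\infty(G)$ be the indicator function of $\{g\}$; invariance makes $m(\delta_g)$ independent of $g$, and since $G$ is infinite the inequality $\delta_{g_1}+\dots+\delta_{g_n}\le\mathbbm 1$ for arbitrarily many distinct $g_i$ forces $m(\delta_g)=0$. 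Hence $m$ vanishes on every finitely supported function, so $\bar m$ vanishes on the class of $\phi$. Applying $\bar m$ to the identity of Theorem \ref{Poincare-Hopf} gives $0=\bar m(\ind(v))=\chi(M/G)\,\bar m(\mathbbm 1)=\chi(M/G)$, contradicting $\chi(M/G)\ne 0$. Therefore $v$ has infinitely many zeros.

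The one step I expect to require care is the reduction in the first paragraph from a tame vector field to a strongly tame one: one must verify that the local condition imposed in Definition \ref{strongly tame} near a zero can be achieved by a compactly supported perturbation that neither destroys boundedness nor introduces infinitely many new zeros. Everything after that is a formal consequence of Theorem \ref{Poincare-Hopf} together with the standard fact that an invariant mean on an infinite amenable group annihilates finitely supported functions.
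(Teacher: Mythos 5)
Your proof is correct and follows the same overall architecture as the paper's: reduce to the strongly tame case, apply Theorem \ref{Poincare-Hopf} together with the local index formula to get a finitely supported representative of $\ind(v)$, and derive a contradiction from amenability. Two ingredients differ in execution. First, for the reduction to strong tameness the paper perturbs $v$ by a homotopy with bounded derivative and invokes Lemma \ref{homotopy} to keep the index unchanged, whereas you modify $v$ directly on compact neighborhoods of the zeros; both work, and both leave the same details to be checked (that zeros sitting on facets $gD\cap hD$ can be pushed into some $\mathrm{Int}(gD)$ without affecting boundedness or finiteness of the zero set). Second, for the final contradiction the paper factors the argument through two structural facts about $\ell^\infty(G)_G$: Proposition \ref{finite non-zero} (Whyte's criterion, showing a finitely supported function vanishes in $\ell^\infty(G)_G$ for any infinite finitely generated $G$, amenable or not) and Proposition \ref{amenable} ($\mathbbm{1}\ne 0$ in $\ell^\infty(G)_G$ iff $G$ is amenable). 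You instead run the classical invariant-mean argument in one stroke: the mean descends to $\ell^\infty(G)_G$, kills finitely supported functions because $G$ is infinite, and sends $\mathbbm{1}$ to $1$. Your version is more elementary and self-contained (no appeal to uniformly finite homology or Whyte's theorem); the paper's version buys the sharper intermediate statement that the finitely supported class is already zero in $\ell^\infty(G)_G$ regardless of amenability, which localizes the role of amenability entirely in the non-vanishing of $\mathbbm{1}$. You also correctly note that $G$ must be infinite, a point the paper leaves implicit.
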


Let $M$ and $G$ be as in Theorem \ref{Poincare-Hopf}. Then by the above mentioned result of Weinberger \cite[Theorem 1]{W} together with \cite{ABW}, one can deduce that a vector bundle $v$ on $M$ with $|v|$ constant and $|dv|$ bounded must have a zero. But one cannot deduce further information on zeros, such as their numbers, from these results. As an application of Theorem \ref{zeros}, we will get the following result, where tameness of a diffeomorphism is defined in Definition \ref{tame diffeo} in an analogous way to tameness of a vector field.


\begin{corollary}
  [{cf. \cite[Corollary to Theorem 1]{W}}]
  \label{diffeo}
  Let $M$ and $G$ be as in Theorem \ref{Poincare-Hopf}. If $G$ is amenable and $\chi(M/G)\ne 0$, then every tame diffeomorphism of $M$ which is $C^1$ close to the identity map must have infinitely many fixed points.
\end{corollary}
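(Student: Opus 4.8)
The plan is to reduce the fixed-point statement for a diffeomorphism $f$ of $M$ to the zero-counting statement for a vector field, via Theorem \ref{zeros}. First I would fix a $G$-invariant Riemannian metric on $M$ (which exists because the action is free and cocompact, pulling back a metric from $M/G$), and recall that on a manifold of bounded geometry there is a $G$-invariant injectivity-radius lower bound $\epsilon_0 > 0$ and bounds on the covariant derivatives of the exponential map on balls of radius $\epsilon_0$. Since $f$ is assumed $C^1$ close to the identity, $d(x, f(x)) < \epsilon_0$ uniformly in $x$, so for each $x$ the point $f(x)$ lies in a normal coordinate ball about $x$ and there is a unique shortest geodesic from $x$ to $f(x)$. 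Define the displacement vector field $v$ by $v(x) = \exp_x^{-1}(f(x)) \in T_xM$. The key elementary observations are: (i) $v(x) = 0$ if and only if $f(x) = x$, so $\Zero(v)$ is exactly the fixed-point set of $f$; and (ii) $v$ is bounded — $|v(x)| = d(x, f(x))$ is bounded by hypothesis, and $|dv|$ is bounded because $dv$ is built from $df$, from $d(\mathrm{id})$, and from derivatives of $(x,y) \mapsto \exp_x^{-1}(y)$ on the $\epsilon_0$-neighborhood of the diagonal, all of which are uniformly bounded on a manifold of bounded geometry when $f$ is $C^1$ close to the identity.

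Next I would check that tameness of $f$ (in the sense of Definition \ref{tame diffeo}) translates into tameness of $v$. By the stated analogy between the two definitions of tameness, a tame diffeomorphism should be precisely one whose fixed-point set, viewed through the displacement vector field, satisfies the same separation/isolation-type condition that tameness of a vector field imposes on $\Zero(v)$; one has to unwind both definitions and verify that the construction $f \mapsto v$ carries one to the other, using that $\exp$ is a diffeomorphism with uniformly bounded derivatives near the diagonal. Granting this, $v$ is a tame bounded vector field on $M$, and $M$, $G$ satisfy the hypotheses of Theorem \ref{zeros} ($G$ amenable, $\chi(M/G) \ne 0$), so $v$ has infinitely many zeros, i.e. $f$ has infinitely many fixed points, completing the proof.

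The main obstacle I expect is the second step: making precise and verifying that tameness of the diffeomorphism $f$ implies tameness of the displacement vector field $v$. This is a matter of carefully matching two definitions across the nonlinear change of variables given by $\exp^{-1}$, and keeping track of the uniformity of all estimates over the noncompact manifold — none of it is deep, but it is the place where the correspondence could fail if the definitions are not set up compatibly (which is presumably why the definition of a tame diffeomorphism is phrased "in an analogous way"). The boundedness of $v$ and the identification $\Zero(v) = \mathrm{Fix}(f)$ are routine once the bounded-geometry estimates for $\exp$ are in hand, and the final invocation of Theorem \ref{zeros} is immediate.
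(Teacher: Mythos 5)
Your proposal is correct and is essentially the paper's own argument: the paper's proof is the one-line observation that a tame diffeomorphism $C^1$ close to the identity is the composite of a tame vector field with the exponential map, followed by an appeal to Theorem \ref{zeros}. Your write-up simply makes explicit (displacement field $v(x)=\exp_x^{-1}(f(x))$, bounded-geometry estimates, matching of the two tameness definitions) the details the paper leaves implicit.
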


\begin{example}
  Let $L$ be the non-compact surface called Jacob's ladder, a surface with infinite genus and two ends, which admits an infinite cyclic covering map onto the closed oriented surface of genus $2$. Then we can apply Corollary \ref{diffeo} to $L$, and conclude that any tame diffeomorphism of $L$ which is $C^1$ close to the identity map must have infinitely many zeros. This can be easily generalized to the infinite connected sum $M=\sharp^\infty N$ of a closed connected oriented even dimensional manifold $N$ with $\chi(N)\ne 2$.
\end{example}

We briefly describe the strategy of our proof, as well as some of the tools we exploit. Let $M$ and $G$ be as in Theorem \ref{Poincare-Hopf}. Recall that the Poincar\'e-Hopf theorem for a compact manifold can be proved by using a suitable integral in top dimensional de Rham
cohomology. Motivated by the compact case, we will define the integral
\begin{equation}
  \label{integral}
  \int_M\colon\widehat{H}^n(M)\to\ell^\infty(G)_G
\end{equation}
where $\dim M=n$, and will prove Theorem \ref{Poincare-Hopf} by using it similarly to the compact case. So our approach is an extension of the classical case by means of $\ell^\infty(G)_G$. However, unlikely to the compact case, the target module $\ell^\infty(G)_G$ of the integral has some interesting algebraic properties we will use to deduce Theorem \ref{zeros}.


Let us observe possible connections of our results to other contexts. Our results could be connected to the index theory on open manifolds by Roe \cite{R}. More specifically, our index could be related to the index of the Dirac operator
\[
d+d^\ast\colon\widehat{\Omega}^{\mathrm{even}}(M)\to\widehat{\Omega}^{\mathrm{odd}}(M)
\]
on the bounded de Rham complex $\widehat{\Omega}^\ast(M)$, which lives in the operator $K$-theory $K_\ast(C^\ast_u(|G|))$ of the uniform Roe algebra $C^\ast_u(|G|)$. There is another possible connection. In \cite{KKT3}, the pushforward of a vector bundle on $M$ to $M/G$ is defined, and its structure group is the group of unitary operators with finite propagation on the Hilbert space of square integrable functions $G\to\C$. On the other hand, as in \cite{KKT1,KKT2}, the module of coinvariants of bounded functions $\Z\to\Z$ appears in the homotopy groups of such a group of unitary operators of finite propagation for $G=\Z$. Then our results could be connected to the obstruction theory for the pushforward of $TM$ onto $M/G$.

As mentioned in \cite{BW}, there is an isomorphism $\widehat{H}^n(M)\cong H^\mathrm{uf}_0(M)$ (see \cite{AB} for the proof), where $\dim M=n$ and $H^\mathrm{uf}_*(M)$ denotes the uniformly finite homology of $M$ as in \cite{BW}. Since uniformly finite homology is a quasi-isometry invariant, there is an isomorphism $H^\mathrm{uf}_0(M)\cong H^\mathrm{uf}_0(G)$. On the other hand, as in \cite{BNW}, there is an isomorphism $H^\mathrm{uf}_0(G)\cong\ell^\infty(G)_G$. Then we get an isomorphism
\[
  \widehat{H}^n(M)\cong\ell^\infty(G)_G.
\]
However, this isomorphism is not explicit as it is given by a zig-zag of several isomorphisms. We believe that the integral \eqref{integral} gives a direct and explicit description of this isomorphism. Our intuition relies on the case $M = \R$ and $G = \Z$ which we treat in Proposition \ref{M=R}, and we propose the following conjecture:


\begin{conjecture}
  \label{conjecture}
  The integral \eqref{integral} is an isomorphism.
\end{conjecture}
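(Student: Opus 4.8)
The plan is to factor the integral \eqref{integral} as a composite of two isomorphisms: a ``bounded de Rham theorem'' identifying $\widehat H^n(M)$ with the top cohomology of a bounded simplicial cochain complex, followed by an elementary combinatorial isomorphism onto $\ell^\infty(G)_G$. We work under the hypotheses that make \eqref{integral} defined, in particular that $M/G$ is orientable, we fix a $G$-invariant Riemannian metric on $M$ and a smooth $G$-equivariant triangulation $K$ of $M$, and we may and do choose $D$ so that $\overline D$ is a finite subcomplex of $K$. Write $n=\dim M$, let $C^k_b=C^k_b(K)$ be the space of bounded real $k$-cochains (bounded functions on $k$-simplices), and let $\delta\colon C^{n-1}_b\to C^n_b$ be the coboundary. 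Since every $n$-form on $M$ is closed, $\widehat H^n(M)=\widehat\Omega^n(M)/d\widehat\Omega^{n-1}(M)$, and integration over simplices gives maps
\[
  \widehat H^n(M)\ \xrightarrow{\ I\ }\ C^n_b/\delta C^{n-1}_b\ \xrightarrow{\ J\ }\ \ell^\infty(G)_G,\qquad
  J\big([(a_\sigma)_\sigma]\big)=\Big[g\mapsto\textstyle\sum_{\sigma\subset gD}a_\sigma\Big],
\]
with $I([\omega])=[(\int_\sigma\omega)_\sigma]$; by construction $J\circ I=\int_M$, because $\int_{gD}\omega=\sum_{\sigma\subset gD}\int_\sigma\omega$.

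That $I$ is an isomorphism is the bounded de Rham theorem in top degree. Surjectivity is immediate: for a bounded cochain $(a_\sigma)$ the $n$-form $\sum_\sigma a_\sigma\beta_\sigma$, with $\beta_\sigma$ a unit-mass bump supported in the interior of $\sigma$, is bounded and integrates to $a_\sigma$ over $\sigma$. For injectivity one must show that a bounded $n$-form $\mu$ with $\int_\sigma\mu=0$ for every top simplex is the differential of a bounded $(n-1)$-form: on each closed top simplex one chooses a primitive $\xi_\sigma$ of $C^1$-norm bounded by a fixed multiple of $\|\mu\|_\infty$ (an explicit homotopy operator on the standard simplex), and reassembles the $\xi_\sigma$ into a global primitive by the usual induction over the skeleta of $K$, correcting the discrepancies of the $\xi_\sigma$ on lower-dimensional faces; cocompactness — finitely many local models, uniformly bounded combinatorics — is what makes every correction, and hence the final primitive, uniformly bounded. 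This reassembly with \emph{uniform} estimates, together with controlling $K$ near the walls $g\overline D\cap gs\overline D$ between adjacent fundamental domains (needed in the next step as well), is the technical heart of the argument and the step I expect to be the main obstacle.

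That $J$ is an isomorphism is elementary. Surjectivity is clear. For well-definedness, note that if $(a_\sigma)=\delta b$ with $b\in C^{n-1}_b$, the interior $(n-1)$-faces of $gD$ contribute $0$ to $\sum_{\sigma\subset gD}a_\sigma$ by coherence of orientations, so only the faces on the walls $gD\cap gsD$ survive and $g\mapsto\sum_{\sigma\subset gD}a_\sigma$ equals the divergence, on the Cayley graph $\Gamma(G,S)$ with $S=\{s\neq 1\mid\overline D\cap s\overline D\neq\emptyset\}$ (finite and generating), of the bounded edge weighting that sums $b$ over each wall; a short computation identifies such divergences with the elements of $\langle\phi-g\phi\mid\phi\in\ell^\infty(G),\,g\in G\rangle$, so its class in $\ell^\infty(G)_G$ vanishes. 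For injectivity, if $g\mapsto\sum_{\sigma\subset gD}a_\sigma$ equals $\operatorname{div}\rho$ for a bounded edge weighting $\rho$, one first modifies $(a_\sigma)$ by $\delta$ of a bounded cochain so that within each $gD$ all the mass sits at one chosen top simplex $\sigma_{gD}$ (a finite linear-algebra move on each $gD$, uniform by finiteness of models), and then, for each oriented edge $e=(g,gs)$, pushes the unit of mass from $\sigma_{gD}$ to $\sigma_{gsD}$ along a path of top simplices in $gD\cup gsD$, which changes $(a_\sigma)$ by $\delta$ of a bounded, uniformly locally finite $(n-1)$-cochain; taking $\rho$ as the weighting of these pushes shows the resulting cochain is a bounded coboundary. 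Hence $J$, and therefore $\int_M=J\circ I$, is an isomorphism. We remark that the same argument identifies \eqref{integral} with the zig-zag isomorphism $\widehat H^n(M)\cong H^{\mathrm{uf}}_0(M)\cong H^{\mathrm{uf}}_0(G)\cong\ell^\infty(G)_G$ up to a naturality check of that zig-zag's constituents — the content of our stronger belief, which we expect to need only further bookkeeping — and that the case $M=\R$, $G=\Z$ of Proposition~\ref{M=R} is recovered.
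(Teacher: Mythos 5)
This statement is a \emph{conjecture} in the paper: the authors offer no general proof, only verifications in two special cases ($G$ finite, and $M=\R$ with $G=\Z$ acting by translation), and they explicitly note that the abstract isomorphism $\widehat{H}^n(M)\cong H^{\mathrm{uf}}_0(M)\cong H^{\mathrm{uf}}_0(G)\cong\ell^\infty(G)_G$ rests on an unpublished result of Attie--Block and is not known to coincide with the integral. Your proposal is therefore not being measured against a proof in the paper, and it should be judged on its own terms as an attempted proof of an open statement.

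As such, it has a genuine gap, and you in fact name it yourself: the injectivity of $I$, i.e.\ the top-degree bounded de Rham theorem asserting that a bounded $n$-form integrating to zero over every top simplex is $d$ of a \emph{bounded} $(n-1)$-form. Everything else in your outline (surjectivity of $I$ via bump forms, the combinatorial analysis of $J$ via divergences of bounded flows on the Cayley graph, which is essentially Whyte's characterization used in Proposition \ref{finite non-zero}) is routine or at least believable, but the skeletal-induction-with-uniform-estimates step is exactly the content of the unpublished reference [AB] and is where all the analytic difficulty lives: the local primitives $\xi_\sigma$ only agree with one another after corrections supported near the $(n-1)$-skeleton, those corrections require solving Poincar\'e-lemma problems with estimates on lower-dimensional strata and extending the solutions to collar neighborhoods, and one must verify that the accumulated correction over infinitely many simplices remains uniformly bounded. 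Writing ``cocompactness is what makes every correction uniformly bounded'' is a plausible heuristic, not an argument. Moreover, even if one grants the abstract isomorphism from [AB], your final claim that the composite $J\circ I$ equals the integral \eqref{integral} ``up to a naturality check'' is precisely the identification the authors say is missing; deferring it to ``further bookkeeping'' leaves the conjecture where it stands. In short: the architecture of your reduction is sound and matches the zig-zag the authors have in mind, but the proposal does not close either of the two open steps, so it is a program rather than a proof.
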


Throughout this paper, manifolds will be smooth and without boundary, unless otherwise specified, and group actions on manifolds will be smooth too.

\subsection*{Acknowledgement}

The authors were partially supported by JSPS KAKENHI Grant Numbers JP22H01123 (Kato), JP22K03284 and JP19K03473 (Kishimoto), JP22K03317 (Tsutaya). The authors deeply appreciate the referees' useful advice and comments. Especially, Section \ref{Module of coinvariants} was significantly improved by the referees' advice.


\section{Module of coinvariants}\label{Module of coinvariants}

In this section, we collect properties of the module of coinvariants $\ell^\infty(G)_G$ that we are going to use. Block and Weinberger \cite{BW} introduced the uniformly finite homology $H^\mathrm{uf}_*(X)$ of a metric space $X$, and showed basic properties of it. Later, Brodzki, Niblo, and Wright \cite{BNW} studied amenability of discrete groups by using the uniformly finite homology, where every discrete group will be equipped with a word metric. They observed that if $G$ is finitely generated, then the uniformly finite chain complex $C_*^\mathrm{uf}(G)$ is naturally isomorphic to the chain complex $C_*(G;\ell^\infty(G))$. Then since $H_0(G,\ell^\infty(G))=\ell^\infty(G)_G$, there is a natural isomorphism
\begin{equation}
  \label{uf}
  H_0^\mathrm{uf}(G)\cong\ell^\infty(G)_G
\end{equation}
whenever $G$ is finitely generated.

\begin{proposition}
  \label{quasi-isometry invariant}
  Let $G$ and $H$ be finitely generated groups. Then a quasi-isometric homomorphism $G\to H$ induces an isomorphism
  \[
    \ell^\infty(H)_H\xrightarrow{\cong}\ell^\infty(G)_G.
  \]
\end{proposition}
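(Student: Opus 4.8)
The plan is to reduce the statement to the known functoriality of uniformly finite homology under quasi-isometries, using the identification \eqref{uf}. First I would recall that a quasi-isometric homomorphism $f\colon G\to H$ between finitely generated groups (with fixed word metrics) is in particular a quasi-isometry of the underlying metric spaces; this is because a quasi-isometric map is coarsely surjective by definition, and a homomorphism that is a quasi-isometric embedding with coarsely dense image admits a coarse inverse, so $f$ is a genuine quasi-isometry. Since Block and Weinberger \cite{BW} established that uniformly finite homology $H^\mathrm{uf}_*$ is a functor on metric spaces and coarse maps that is invariant under quasi-isometry, $f$ induces an isomorphism $H^\mathrm{uf}_0(G)\xrightarrow{\cong}H^\mathrm{uf}_0(H)$, and hence (after choosing a coarse inverse) an isomorphism $H^\mathrm{uf}_0(H)\xrightarrow{\cong}H^\mathrm{uf}_0(G)$.

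Next I would combine this with the natural isomorphism \eqref{uf}, which identifies $H^\mathrm{uf}_0(G)\cong\ell^\infty(G)_G$ and $H^\mathrm{uf}_0(H)\cong\ell^\infty(H)_H$. The naturality of \eqref{uf} — it comes from the identification of the uniformly finite chain complex $C^\mathrm{uf}_*(G)$ with $C_*(G;\ell^\infty(G))$ as in \cite{BNW} — is what lets us transport the homology isomorphism through to the stated isomorphism $\ell^\infty(H)_H\xrightarrow{\cong}\ell^\infty(G)_G$. Concretely, on the level of $0$-chains a uniformly finite $0$-chain on $G$ is exactly a bounded function $G\to\R$, and its class in $H^\mathrm{uf}_0$ corresponds to the coinvariant class; the map induced by $f$ on these classes is, up to the boundary relations, the map $\ell^\infty(H)\to\ell^\infty(G)$ that sums the values of a bounded function over fibers of $f$, and I would record this description since it is the form in which the proposition will be used later.

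The main obstacle — really the only subtlety — is verifying that a quasi-isometric homomorphism is a quasi-isometry in the bona fide sense (coarsely surjective with a coarse inverse), since \cite{BW} phrases quasi-isometry invariance for honest quasi-isometries rather than for coarse embeddings. I expect this to be short: coarse density of the image of $f$ is part of the definition of a quasi-isometric map, and one builds a coarse inverse by choosing, for each $h\in H$, a preimage-up-to-bounded-error in $G$; the homomorphism property is not even needed for this, only the quasi-isometry property. Alternatively, one may avoid choosing a coarse inverse altogether by noting that $H^\mathrm{uf}_0$ sends coarse equivalences to isomorphisms and that $f$ is a coarse equivalence. Everything else is an invocation of \cite{BW}, \cite{BNW}, and \eqref{uf}.
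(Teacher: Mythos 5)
Your proof is correct and follows essentially the same route as the paper: the paper simply cites \cite[Corollary 2.2]{BW} for the isomorphism on $H^{\mathrm{uf}}_*$ induced by a quasi-isometric homomorphism and then applies the identification \eqref{uf}, exactly as you propose. (One small aside: your parenthetical description of the induced map $\ell^\infty(H)_H\to\ell^\infty(G)_G$ as ``summing the values over fibers of $f$'' has the variance backwards, since summing over fibers of $f\colon G\to H$ produces a map $\ell^\infty(G)\to\ell^\infty(H)$; but the paper neither records nor later uses such a concrete formula, so this does not affect the argument.)
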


\begin{proof}
  By \cite[Corollary 2.2]{BW}, a quasi-isometric homomorphism $G\to H$ induces an isomorphism $H^\mathrm{uf}_*(G)\xrightarrow{\cong}H^\mathrm{uf}_*(H)$. Then the statement follows from \eqref{uf}.
\end{proof}

\begin{corollary}
  \label{G finite}
  If $G$ is a finite group, then
  \[
    \ell^\infty(G)_G\cong\R.
  \]
\end{corollary}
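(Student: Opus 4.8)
The plan is to deduce Corollary \ref{G finite} directly from Proposition \ref{quasi-isometry invariant} by taking $H$ to be the trivial group. First I would observe that the trivial group $\{1\}$ is finitely generated (by the empty set), so Proposition \ref{quasi-isometry invariant} applies with $H=\{1\}$. The key point is that any finite group $G$, equipped with a word metric coming from a finite generating set, has bounded diameter, hence is quasi-isometric to a point; consequently the unique homomorphism $G\to\{1\}$ is a quasi-isometric homomorphism. Plugging this into Proposition \ref{quasi-isometry invariant} yields an isomorphism $\ell^\infty(\{1\})_{\{1\}}\xrightarrow{\cong}\ell^\infty(G)_G$.

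It then remains to identify $\ell^\infty(\{1\})_{\{1\}}$ with $\R$. Since $\{1\}$ has exactly one element, $\ell^\infty(\{1\})$ is the one-dimensional vector space of functions $\{1\}\to\R$, which is canonically isomorphic to $\R$; moreover the subspace $\langle\phi-g\phi\mid g\in\{1\}\rangle$ is zero because $g=1$ forces $\phi-g\phi=0$. Hence $\ell^\infty(\{1\})_{\{1\}}\cong\R$, and composing with the isomorphism from the previous paragraph gives $\ell^\infty(G)_G\cong\R$.

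I do not anticipate a serious obstacle here, as this is essentially a formal consequence of the preceding proposition together with the elementary fact that a finite metric space is quasi-isometric to a point; the only mild subtlety worth stating explicitly is the choice of word metric on $G$ (needed so that Proposition \ref{quasi-isometry invariant} is applicable) and the verification that the collapsing map is indeed a quasi-isometry, both of which are routine.
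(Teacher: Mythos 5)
Your proof is correct and takes essentially the same approach as the paper: the paper also invokes Proposition \ref{quasi-isometry invariant} with the trivial group (using the inclusion $1\to G$ rather than your collapsing map $G\to 1$, which changes only the direction of the induced isomorphism) together with the observation that $\ell^\infty(1)_1\cong\R$.
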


\begin{proof}
  Let $1$ denote the trivial group. Since $G$ is finite, the inclusion $1\to G$ is a quasi-isometry. Then since $\ell^\infty(1)_1\cong\R$, the statement is proved by Proposition \ref{quasi-isometry invariant}.
\end{proof}

\begin{proposition}
  \label{finite non-zero}
  Let $G$ be a finitely generated infinite group, and let $\phi\in\ell^\infty(G)$. If $\phi(g)=0$ for all but finitely many $g\in G$, then $\phi$ is zero in $\ell^\infty(G)_G$.
\end{proposition}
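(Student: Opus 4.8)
The plan is to reduce to the case of a single point mass and then kill it by the coinvariance relations. Since $\phi$ vanishes outside a finite set $F\subset G$, write $\phi=\sum_{g\in F}\phi(g)\,\delta_g$, where $\delta_g\in\ell^\infty(G)$ is the indicator function of $\{g\}$. By linearity of the quotient map $\ell^\infty(G)\to\ell^\infty(G)_G$, it suffices to prove that each $\delta_g$ is zero in $\ell^\infty(G)_G$; and since $\delta_g=g'\delta_{g(g')^{-1}}$ for the left action as defined in the introduction (indeed $(g'\delta_h)(k)=\delta_h(kg')$, which is $1$ exactly when $k=h(g')^{-1}$), all the $\delta_g$ are translates of one another, so it is enough to show that $\delta_1\equiv 0$ in $\ell^\infty(G)_G$.

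The key step is to exhibit $\delta_1$ as a ``telescoping'' sum of differences $\psi-g\psi$. Fix an element $a\in G$ of infinite order — here I would use that $G$ is infinite and finitely generated to find such an element, or more robustly pass to an infinite subset along which one can spread mass; the cleanest route is: choose an infinite sequence $g_1=1,g_2,g_3,\dots$ of distinct elements of $G$ and let $\psi\in\ell^\infty(G)$ be the function that is $1$ on $\{g_1,g_2,g_3,\dots\}$ and $0$ elsewhere, which is bounded. Then I want a single group element $a$ with $a g_i = g_{i+1}$ for all $i$, so that $\psi-a\psi=\delta_{g_1}=\delta_1$; such an $a$ exists precisely when we take $g_i=a^{i-1}$ for an infinite-order element $a$. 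Then $\psi-a\psi=\mathbbm{1}_{\{1\}}-\mathbbm{1}_{\{a,a^2,\dots\}}\cdot(\text{shift})$, and a direct check gives $(\psi-a\psi)(k)=\psi(k)-\psi(ka)$, which equals $1$ if $k=1$ and $0$ otherwise, i.e.\ $\psi-a\psi=\delta_1$. Hence $\delta_1\in\langle\phi-g\phi\rangle$ and is zero in $\ell^\infty(G)_G$, completing the argument.

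The one point requiring care is the existence of an infinite-order element in an arbitrary finitely generated infinite group — this is false in general (there are infinite finitely generated torsion groups). So the main obstacle is to replace the ``$a^i$'' construction by something that works for every such $G$. I expect the fix to be the following combinatorial lemma: in any infinite group one can find an infinite sequence of distinct elements $g_1,g_2,\dots$ together with elements $a_1,a_2,\dots$ such that $g_{i+1}=g_i a_i$ and the ``defect'' from a genuine single translation is still expressible through finitely many difference relations; concretely, for a \emph{finite} set $F$ one can enumerate $F=\{f_1,\dots,f_m\}$ and pick distinct elements $h_{j}$ outside $F$, then $\delta_{f_j}=\mathbbm{1}_{\{f_j\}}$ and one writes $\mathbbm{1}_{\{f_j\}}-\mathbbm{1}_{\{h_j\}}=\eta_j-g_j\eta_j$ where $\eta_j=\mathbbm{1}_{\{f_j,\,h_j\}}$ and $g_j$ is chosen so that right translation by $g_j$ swaps $f_j$ and $h_j$ — but such a $g_j$ need not exist either. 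The robust statement is instead: moving a point mass to ``infinity'' costs nothing modulo the difference relations because one can slide it one step at a time along a ray, and in an infinite group such an infinite simple ``path'' of right-multiplications always exists (choose $g_1=1$ and inductively $g_{i+1}=g_i s_i$ with $s_i$ a generator chosen so that $g_{i+1}\notin\{g_1,\dots,g_i\}$, possible since the group is infinite), after which $\psi=\sum_i\mathbbm{1}_{\{g_i\}}$ satisfies $\delta_1=\psi-(\text{translate of }\psi\text{ along the path})$ after grouping the steps by generator. I would organize the final write-up around this sliding construction, as it is the genuinely non-trivial content of the proposition.
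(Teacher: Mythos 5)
Your approach is genuinely different from the paper's. The paper quotes Whyte's criterion \cite[Theorem 7.6]{Wh} for a $0$-chain to bound in $H_0^{\mathrm{uf}}(G)$ (an isoperimetric-type inequality) and checks it trivially for finitely supported $\phi$, using only that every non-empty finite subset of an infinite group has a non-empty $1$-neighbourhood. You instead give a direct, self-contained argument: reduce to the point mass $\delta_1$ and exhibit it explicitly as a finite sum of relations $\psi-g\psi$ by sliding the mass to infinity along a ray and grouping the infinitely many one-step relations according to the (finitely many) generators used. This is essentially the constructive content behind Whyte's criterion in this special case, and it is correct in outline; what it buys is independence from the citation, at the cost of having to handle torsion groups and to produce the ray. (Minor bookkeeping: with the paper's action $(g\phi)(h)=\phi(hg)$ one has $g'\delta_h=\delta_{h(g')^{-1}}$, so $\delta_g=(g^{-1})\delta_1$, not $g'\delta_{g(g')^{-1}}$; this does not affect the reduction.)

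Two points need to be nailed down before this is a proof. First, the telescoping must genuinely be reorganized into finitely many difference relations: with a ray $g_1=1$, $g_{i+1}=g_is_i$, $s_i\in S$, all $g_i$ distinct, set $A_s=\{i:s_i=s\}$ and $\psi_s=\mathbbm{1}_{\{g_i\,:\,i\in A_s\}}\in\ell^\infty(G)$ for each $s$ in the finite set $S$; then a pointwise check gives $\psi_s-s^{-1}\psi_s=\sum_{i\in A_s}(\delta_{g_i}-\delta_{g_{i+1}})$ and hence $\sum_{s\in S}(\psi_s-s^{-1}\psi_s)=\delta_{g_1}=\delta_1$, a finite sum of elements of $\langle\phi-g\phi\rangle$. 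You gesture at exactly this, but the identity is the crux and must be written out; an infinite telescoping sum by itself proves nothing in a purely algebraic quotient. Second, your inductive construction of the ray (``choose $s_i$ so that $g_{i+1}\notin\{g_1,\dots,g_i\}$'') can fail: a greedy self-avoiding walk can trap itself even in the standard Cayley graph of $\Z^2$ (walk around a unit square and then step into its interior). The correct statement is that every infinite, connected, locally finite graph contains a ray; apply K\"onig's lemma to initial segments of geodesics from $1$ to points at distance $n$ (such points exist for every $n$ since balls are finite and $G$ is infinite) to obtain an infinite geodesic ray, whose vertices are automatically distinct. With these two repairs your argument is complete.
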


\begin{proof}
  In \cite[Theorem 7.6]{Wh}, Whyte gave a necessary and sufficient condition for an element of $C_0^\mathrm{uf}(G)$ to be trivial in $H^\mathrm{uf}_0(G)$. Through the natural isomorphism $C_0^\mathrm{uf}(G)\cong C_0(G;\ell^\infty(G))=\ell^\infty(G)$, this condition is stated as follows. An element $\phi\in\ell^\infty(G)$ is zero in $\ell^\infty(G)_G$ if and only if there are $C>0$ and $r>0$ such that for any finite subset $S\subset G$,
  \[
    \left|\sum_{g\in S}\phi(g)\right|\le C\cdot\sharp\{g\in G\mid 0<d(g,S)\le r\}
  \]
  where $d$ denotes a word metric of $G$. If $G$ is infinite, then for any non-empty finite subset $S\subset G$, we have $\sharp\{g\in G\mid 0<d(g,S)\le 1\}\ge 1$. Suppose $\phi\in\ell^\infty(G)$ satisfies $\phi(g)=0$ for all but finitely many $g\in G$. Then if we set $C=\sum_{g\in G}|\phi(g)|$ and $r=1$, the above inequality holds for any finite subset $S\subset G$, and so $\phi$ is zero in $\ell^\infty(G)_G$.
\end{proof}

Recall that a mean on a group $G$ is a linear map
\[
  \mu\colon\ell^\infty(G)\to\R.
\]
such that $\mu(\mathbbm{1})=1$ and $\mu(\phi)\ge 0$ whenever $\phi(g)\ge 0$ for all $g\in G$, where $\mathbbm{1}\in\ell^\infty(G)$ denotes the constant function with value $1$ as in Section \ref{Introduction}. A group $G$ is \emph{amenable} if it admits a $G$-invariant mean. The proof of \cite[Theorem 3.1]{BW} together with \eqref{uf} implies the following.

\begin{proposition}
  \label{amenable}
  For a finitely generated group $G$, the following statements are equivalent:

  \begin{enumerate}
    \item $G$ is amenable;

    \item $\ell^\infty(G)_G\ne 0$;

    \item $\mathbbm{1}\in\ell^\infty(G)$ is non-zero in $\ell^\infty(G)_G$.
  \end{enumerate}
\end{proposition}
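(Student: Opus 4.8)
The plan is to prove the equivalence of (1), (2), and (3) by combining the known characterization of amenability via uniformly finite homology with the isomorphism \eqref{uf}. First I would recall the content of \cite[Theorem 3.1]{BW}: a finitely generated group $G$ is amenable if and only if the fundamental class $[G]\in H_0^\mathrm{uf}(G)$ is non-zero, and more generally $G$ is amenable if and only if $H_0^\mathrm{uf}(G)\ne 0$. Under the natural isomorphism \eqref{uf}, the fundamental class $[G]$ corresponds precisely to the class of the constant function $\mathbbm{1}\in\ell^\infty(G)$ in $\ell^\infty(G)_G$, since the uniformly finite $0$-chain representing $[G]$ assigns the coefficient $1$ to every group element. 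This immediately translates the Block--Weinberger dichotomy into the statements (2) and (3).

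The step-by-step structure would be: show $(1)\Rightarrow(3)$, then $(3)\Rightarrow(2)$, then $(2)\Rightarrow(1)$. The implication $(3)\Rightarrow(2)$ is trivial, since if $\mathbbm{1}$ is non-zero in $\ell^\infty(G)_G$ then $\ell^\infty(G)_G\ne 0$. For $(1)\Rightarrow(3)$: if $G$ is amenable, fix a $G$-invariant mean $\mu\colon\ell^\infty(G)\to\R$; then $\mu$ kills every element of the form $\phi-g\phi$ by $G$-invariance, so it descends to a linear map $\bar\mu\colon\ell^\infty(G)_G\to\R$ with $\bar\mu(\mathbbm{1})=\mu(\mathbbm{1})=1\ne 0$, hence $\mathbbm{1}$ is non-zero in $\ell^\infty(G)_G$. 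For $(2)\Rightarrow(1)$: this is the substantive direction and is exactly where one invokes the hard part of \cite[Theorem 3.1]{BW} (via Whyte's criterion, as used in Proposition \ref{finite non-zero}), namely that non-amenability of $G$ forces $H_0^\mathrm{uf}(G)=0$, equivalently $\ell^\infty(G)_G=0$; transported across \eqref{uf}, this says that if $G$ is not amenable then $\ell^\infty(G)_G=0$, which is the contrapositive of $(2)\Rightarrow(1)$.

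I expect the main obstacle to be purely expository rather than mathematical: one must be careful to pin down that the isomorphism \eqref{uf} identifies the uniformly finite fundamental class with $[\mathbbm{1}]$, so that (3) is genuinely the translation of the Block--Weinberger fundamental-class criterion and not merely a consequence of (2). Once that identification is made explicit, the proof is a short chain of implications, with the only deep input being the non-amenable case of \cite[Theorem 3.1]{BW}, which we are entitled to cite. The argument for $(1)\Rightarrow(3)$ via a mean is elementary and self-contained, and could alternatively be stated first to make the proposition readable without unwinding \eqref{uf}, but the cleanest presentation routes everything through \cite{BW} and \eqref{uf} as the statement preceding the proposition already advertises.
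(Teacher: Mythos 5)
Your proposal is correct and follows essentially the same route as the paper, which simply cites \cite[Theorem 3.1]{BW} together with the isomorphism \eqref{uf}; your identification of the uniformly finite fundamental class with $[\mathbbm{1}]$ and the elementary mean argument for $(1)\Rightarrow(3)$ are exactly the details the paper leaves implicit.
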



\section{Basic properties of fundamental domains}\label{Fundamental domain}

In this section, we define a fundamental domain of a manifold with a free group action, and show its basic properties. Throughout this section, let $M$ be a connected manifold of dimension $n$, possibly with boundary, on which a cocompact and properly discontinuous action of a group $G$ is given. Since $G$ is a quotient of the fundamental group of a compact manifold $M/G$, which is finitely generated, $G$ is finitely generated.

We define a \emph{fundamental domain} $D$ of $M$ as the closure of an open set of $M$ such that
\[
  M=\bigcup_{g\in G}gD\quad\text{and}\quad\mathrm{Int}(D)\cap\mathrm{Int}(gD)=\emptyset
\]
for all $1\ne g\in G$. Remark that $D$ needs not be connected. A manifold $M$ admits a fundamental domain. Indeed, given a triangulation of $M/G$, we can lift it to get a triangulation of $M$ such that the $G$-action is free and simplicial. We choose one lift of the interior of each maximal simplex of $M/G$ to $M$, so the closure of the union of these open simplices of $M$ is a fundamental domain of $M$. We choose such a fundamental domain, so that $D$ is a simplicial complex such that each $D\cap gD$ is a subcomplex of $D$ and
\begin{equation}
  \label{boundary D}
  \partial D=\left(\bigcup_{1\ne g\in G}D\cap gD\right)\cup(D\cap\partial M).
\end{equation}
If $gD\cap hD$ is $(n-1)$-dimensional, then we call it a \emph{facet} of $gD$ (and $hD$). We also call $gD\cap\partial M$ a facet of $gD$ when $\partial M\ne\emptyset$. Then the boundary of $D$ is the union of its facets. Clearly, the $G$-action on $M$ restricts to $\partial M$, and $D\cap\partial M$ is a fundamental domain of $\partial M$.

We construct a generating set of $G$ by using a fundamental domain $D$. Let $S$ be a subset of $G$ consisting of elements $g\in G$ such that $D\cap gD$ is a facet of $D$.

\begin{proposition}
  \label{generator}
  The set $S$ is a symmetric generating set of $G$.
\end{proposition}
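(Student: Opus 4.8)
The plan is to prove symmetry directly and to prove that $S$ generates $G$ by a tiling-and-connectedness argument based on the $G$-invariant triangulation $K$ of $M$ used in the construction of $D$.

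Symmetry is immediate: if $g\in S$ then $D\cap gD$ is $(n-1)$-dimensional, hence so is $g^{-1}(D\cap gD)=g^{-1}D\cap D$, so $g^{-1}\in S$ (and $g^{-1}\ne 1$ since $g\ne 1$).

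For generation, let $H=\langle S\rangle$. Recall first that each closed top-dimensional simplex $\sigma$ of $K$ lies in exactly one translate of $D$: we write $t(\sigma)=g$ when $\sigma\subseteq gD$, uniqueness following from $\mathrm{Int}\,\sigma$ being open and $\mathrm{Int}(gD)\cap\mathrm{Int}(hD)=\emptyset$ for $g\ne h$. I would then record the local adjacency fact: if $\tau$ is an $(n-1)$-simplex of $K$ contained in the interior of $M$, and $\sigma_1,\sigma_2$ are the two top simplices containing $\tau$, then $t(\sigma_1)^{-1}t(\sigma_2)\in S\cup\{1\}$; indeed, if $t(\sigma_1)\ne t(\sigma_2)$ then $\tau\subseteq t(\sigma_1)D\cap t(\sigma_2)D$, so $t(\sigma_1)^{-1}\tau\subseteq D\cap t(\sigma_1)^{-1}t(\sigma_2)D$ exhibits $D\cap t(\sigma_1)^{-1}t(\sigma_2)D$ as $(n-1)$-dimensional, i.e.\ $t(\sigma_1)^{-1}t(\sigma_2)\in S$. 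Next I would invoke that the \emph{dual graph} $\Gamma$ of $K$ — vertices the top simplices of $K$, edges the interior $(n-1)$-simplices, joining their two neighbours — is connected. This is the standard fact that the dual graph of a connected triangulated manifold is connected: if $A$ is the union of the closed top simplices in one connected component of $\Gamma$ and $B$ the union of the rest, then $A$ and $B$ are closed (the triangulation is locally finite), cover $M$, and meet only in simplices of dimension $\le n-2$ (two top simplices sharing an interior $(n-1)$-simplex lie in the same component, and an $(n-1)$-simplex in $\partial M$ lies in only one top simplex), so $A\cap B$ is contained in the $(n-2)$-skeleton of $K$, which does not locally separate the $n$-manifold $M$; connectedness of $M$ then forces $B=\emptyset$. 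Finally I would combine these: choose a top simplex $\sigma_0\subseteq D$, so $t(\sigma_0)=1$; for arbitrary $g\in G$ the simplex $g\sigma_0$ satisfies $t(g\sigma_0)=g$; picking an edge-path in $\Gamma$ from $\sigma_0$ to $g\sigma_0$ and multiplying the adjacency relations along it telescopes to $g=t(\sigma_0)^{-1}t(g\sigma_0)\in H$. Hence $H=G$.

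The routine points — local finiteness of the family $\{gD\}$, the fact that $D\cap hD$ being $(n-1)$-dimensional is equivalent to its containing an $(n-1)$-simplex (it is a subcomplex of $D$), and the well-definedness of $t$ — I would dispatch in a sentence each. The one step that deserves genuine care, and the main obstacle, is the connectedness of $\Gamma$, equivalently the statement that a codimension $\ge 2$ subcomplex does not disconnect $M$; this is classical (it is exactly where the manifold hypothesis is used, and one may alternatively phrase the whole generation argument downstairs, via a loop in $M/G$ made transverse to the codimension-one skeleton and its crossings with facets), and it is what rules out the a priori possibility that $M_H:=\bigcup_{h\in H}hD$ fails to be open at points lying on low-dimensional faces where many tiles meet.
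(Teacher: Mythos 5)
Your proof is correct and follows essentially the same route as the paper's: both express $g$ as a telescoping product of facet-crossing elements $g_i^{-1}g_{i+1}\in S$ obtained by walking from $D$ to $gD$ through adjacent tiles, and both prove symmetry by translating the facet $D\cap gD$ by $g^{-1}$. The only difference is packaging: the paper takes a path in $M$ in general position meeting only interiors of facets, while you encode the same connectedness input as connectedness of the dual graph of the triangulation, which makes explicit the codimension-$\ge 2$ non-separation fact that the paper's general-position argument leaves implicit.
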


\begin{proof}
  Let $g\in G$ and $x\in\mathrm{Int}(D)$. Then $gx$ belongs to $\mathrm{Int}(gD)$, and so since $M$ is connected, there is a path $\ell$ from $x$ to $gx$ which passes $g_0D,g_1D,\ldots,g_kD$ in order for $1=g_0,g_1,\ldots,g_{k-1},g_k=g\in G$ such that $g_iD\cap g_{i+1}D$ is a facet and $\ell\cap g_iD\cap g_{i+1}D$ is a single point sitting in the interior of a facet $g_iD\cap g_{i+1}D$ of $g_iD$ for $i=0,1,\ldots,k-1$. Since $g_iD\cap g_{i+1}D=g_i(D\cap g_i^{-1}g_{i+1}D)$ is a facet of $g_iD$, $D\cap g_i^{-1}g_{i+1}D$ is a facet of $D$, implying $g_i^{-1}g_{i+1}\in S$. Thus since
  \[
    g=g_k=(g_0^{-1}g_1)(g_1^{-1}g_2)\cdots(g_{k-1}^{-1}g_k),
  \]
  we obtain that $S$ is a generating set of $G$. If $g\in S$, then $g(D\cap g^{-1}D)=gD\cap D$ is a facet of $D$, and so $D\cap g^{-1}D$ is a facet of $D$ too. Hence $g^{-1}\in S$, that is, $S$ is symmetric, completing the proof.
\end{proof}

\begin{corollary}
  \label{partition}
  There is a partition $S=S^+\sqcup S^-\sqcup S^0$ such that $(S^+)^{-1}=S^-$ and $(S^0)^2=\{1\}$.
\end{corollary}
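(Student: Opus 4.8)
The plan is to organize the symmetric generating set $S$ from Proposition \ref{generator} according to the involution $\iota\colon S\to S$, $\iota(g)=g^{-1}$, which is well-defined precisely because $S=S^{-1}$. First I would set $S^0=\{g\in S\mid g^2=1\}$, i.e.\ the fixed-point set of $\iota$; by construction every $g\in S^0$ satisfies $g^2=1$, which is the required condition on $S^0$. (Note in passing that $1\notin S$ since $D\cap D=D$ is $n$-dimensional rather than a facet, so $S^0$ in fact consists of genuine involutions, though this is not needed.)

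Next, since $S$ is $\iota$-invariant, so is the complement $S\setminus S^0$, and on it $\iota$ acts freely, so $S\setminus S^0$ decomposes as a disjoint union of two-element orbits $\{g,g^{-1}\}$ with $g\ne g^{-1}$. Here I would use that $S$ is finite: $D$ is a finite simplicial complex, being the closure of the union of one lift per maximal simplex of the finite triangulation of the compact manifold $M/G$, hence $D$ has only finitely many facets and $S$ is finite; thus no appeal to the axiom of choice is needed to select one element from each $\iota$-orbit in $S\setminus S^0$. Let $S^+$ be a set of such representatives and put $S^-=(S^+)^{-1}$, so that $(S^+)^{-1}=S^-$ holds by definition.

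Finally I would check that $S=S^+\sqcup S^-\sqcup S^0$ is genuinely a partition. Each orbit $\{g,g^{-1}\}\subset S\setminus S^0$ contributes its chosen representative to $S^+$ and the remaining element to $S^-$, so $S^+\cup S^-=S\setminus S^0$; and if some $g$ lay in $S^+\cap S^-$ then $g=h^{-1}$ for some $h\in S^+$, forcing $h=g^{-1}$, and since $g\notin S^0$ gives $g\ne g^{-1}$ this would place two distinct elements of one orbit into $S^+$, contradicting the one-representative choice. Hence $S^+\cap S^-=\emptyset$, and $S^0$ is disjoint from $S^+\cup S^-$ by construction. There is no real obstacle here: the only points that need attention are the finiteness of $S$ (so the selection is elementary) and the short disjointness bookkeeping just indicated; everything else is immediate from $S=S^{-1}$.
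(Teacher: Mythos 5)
Your proof is correct and follows essentially the same route as the paper's: the paper simply sets $S^0$ to be the order-two elements of $S$ and observes that the rest follows from $S=S^{-1}$, which is exactly the orbit decomposition under $g\mapsto g^{-1}$ that you spell out. Your additional remarks on the finiteness of $S$ and the disjointness bookkeeping are just the details the paper leaves implicit.
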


\begin{proof}
  Let $S^0$ be the subset of $S$ consisting of elements of order two. Then the statement follows because $S$ is symmetric.
\end{proof}

Let $S^+=\{s_1,\ldots,s_k\}$ and $S^0=\{t_1,\ldots,t_l\}$, where $S^+$ and $S^0$ are finite because $G$ is finitely generated as mentioned above. We put
\[
  E=D\cap\partial M,\quad F_i^+=D\cap s_iD,\quad F_i^-=D\cap s_i^{-1}D,\quad F_j^0=D\cap t_jD
\]
for $i=1,2,\ldots,k$ and $j=1,2,\ldots,l$.

\begin{lemma}
  \label{boundary}
  The facets of $D$ are $E,F_1^+,\ldots,F_k^{+},F_1^-,\ldots,F_k^-,F_1^0,\ldots,F_l^0$.
\end{lemma}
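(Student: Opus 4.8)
The plan is to prove the lemma by directly unwinding the definition of a facet and then sorting the group elements that produce facets according to the partition of Corollary \ref{partition}; once the definitions are lined up the statement is essentially a bookkeeping matter.

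First I would record the two ways a facet of $D$ can arise: by definition it is either a subset of the form $D\cap gD$ that is $(n-1)$-dimensional, or, when $\partial M\neq\emptyset$, the subset $D\cap\partial M=E$. So every facet of $D$ belongs to one of these two families, and it suffices to enumerate those of the first kind. Since $D$ is the closure of a non-empty open subset of the $n$-manifold $M$, it is itself $n$-dimensional; hence $D=D\cap 1\cdot D$ is not a facet, and any $g$ with $D\cap gD$ a facet is automatically nontrivial. By the definition of $S$ given just before Proposition \ref{generator}, the elements $g\in G$ for which $D\cap gD$ is a facet of $D$ are precisely those of $S$, so the facets of the first kind are exactly $\{D\cap gD\mid g\in S\}$.

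Next I would substitute the partition $S=S^+\sqcup S^-\sqcup S^0$ of Corollary \ref{partition} together with the notation fixed above the lemma, namely $S^+=\{s_1,\dots,s_k\}$, $S^-=(S^+)^{-1}=\{s_1^{-1},\dots,s_k^{-1}\}$, and $S^0=\{t_1,\dots,t_l\}$. Letting $g$ run over $S^+$, then $S^-$, then $S^0$ turns $\{D\cap gD\mid g\in S\}$ into the list $F_1^+,\dots,F_k^+,F_1^-,\dots,F_k^-,F_1^0,\dots,F_l^0$; adjoining $E$ in the case $\partial M\neq\emptyset$ (and noting that $E=\emptyset$ is simply omitted when $\partial M=\emptyset$) yields exactly the collection in the statement.

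Since the whole argument is just an unwinding of definitions, I do not expect a genuine obstacle. The only points deserving a moment's care are the two reductions in the second paragraph — confirming that the two-family description really does capture every facet of $D$, and that ``$D\cap gD$ is a facet'' is synonymous with ``$g\in S$'', with $g$ forced to be nontrivial — together with the degenerate case $\partial M=\emptyset$, in which $E$ does not occur. If one additionally wants the displayed list to be free of repetitions, that can be extracted from a uniqueness argument in the spirit of the proof of Proposition \ref{generator}: near a point in the relative interior of an $(n-1)$-dimensional facet $D\cap gD$ exactly the two translates $D$ and $gD$ meet, so such a facet determines $g$.
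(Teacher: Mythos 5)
Your proposal is correct and follows essentially the same route as the paper, which simply observes that the list of facets is obtained by combining the definition of $S$ with the partition $S=S^+\sqcup S^-\sqcup S^0$ of Corollary \ref{partition}; your version just spells out the definitional unwinding (and the optional non-repetition remark) that the paper leaves implicit.
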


\begin{proof}
  The statement follows from Corollary \ref{partition}.
\end{proof}

We consider an orientation of a facet of $gD$.

\begin{lemma}
  \label{orientation}
  Suppose that $M$ is oriented. If $F=gD\cap hD$ is a facet for $g,h\in G$, then the orientations of $F$ induced from $gD$ and $hD$ are opposite.
\end{lemma}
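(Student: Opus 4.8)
The plan is to pass to a generic point of the facet, reduce locally to the model picture of two half-spaces of $\R^n$ glued along a hyperplane, and then apply the outward-normal-first convention for boundary orientations.

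Since $F=gD\cap hD$ is $(n-1)$-dimensional we have $g\ne h$, hence $\mathrm{Int}(gD)\cap\mathrm{Int}(hD)=\emptyset$. Using the simplicial structure fixed in this section, $F$ contains an $(n-1)$-simplex $\sigma$, and I would pick a point $x$ in the relative interior of $\sigma$; such an $x$ is a smooth point of $F$ and does not lie in $\partial M$ (an $(n-1)$-simplex inside $\partial M$ bounds only one top simplex and so cannot be shared by two distinct translates $gD$ and $hD$). As $M$ is a manifold, $\sigma$ is a face of exactly two $n$-simplices, and since $gD$ and $hD$ are unions of closed $n$-simplices with disjoint interiors, one of these two $n$-simplices lies in $gD$ and the other in $hD$. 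A short argument using local finiteness of the $G$-action together with the facts $\partial D=\bigcup_{1\ne g}D\cap gD$ and that each $D\cap gD$ is a subcomplex then shows that in a small neighborhood $U$ of $x$ no translate other than $gD,hD$ occurs, and $U\cap F$ separates $U$ into $U\cap gD$ and $U\cap hD$.

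It then suffices to choose an orientation-preserving chart $\varphi\colon U\to\R^n$ with $\varphi(U\cap F)\subseteq\{x_n=0\}$, $\varphi(U\cap gD)\subseteq\{x_n\le 0\}$ and $\varphi(U\cap hD)\subseteq\{x_n\ge 0\}$: the outward normal of $gD$ along $F$ is $+\partial_{x_n}$ while that of $hD$ along $F$ is $-\partial_{x_n}$, and since reversing the outward normal reverses the induced boundary orientation, the two orientations on $F$ are opposite. The one genuinely substantive point is the local normal-form claim in the previous paragraph — that near a relative-interior point of the facet the fundamental domain and its neighbor occupy complementary half-spaces with no third translate intruding; the remaining orientation bookkeeping is routine.
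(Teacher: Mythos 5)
Your proposal is correct and rests on exactly the same observation as the paper's one-line proof: the outward normal of $gD$ along $F$ is the inward normal of $hD$, so the induced boundary orientations are opposite. You simply carry out the local half-space reduction that the paper leaves implicit.
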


\begin{proof}
  An outward vector of $gD$ rooted at $F$ is an inward vector of $hD$. Then the statement follows.
\end{proof}


\section{Integral in bounded cohomology}\label{Integral in bounded cohomology}

In this section, we define the integral in bounded cohomology. Let $M$ be a connected Riemannian manifold of dimension $n$, possibly with boundary. As in \cite{R}, we say that a differential form $\omega$ on $M$ is \emph{bounded} if both $|\omega|$ and $|d\omega|$ are bounded. Let $\widehat{\Omega}^p(M)$ denote the set of bounded $p$-forms on $M$. Then by definition, $\widehat{\Omega}^*(M)$ is closed under differential, and so it is a differential graded algebra. We define the \emph{bounded de Rham cohomology} of $M$ as the cohomology of $\widehat{\Omega}^*(M)$, which we denote by $\widehat{H}^*(M)$. We record the following obvious fact.

\begin{lemma}
  \label{induced map}
  If a map $f\colon M\to N$ between manifolds has bounded derivative, then it induces a map $f^*\colon\widehat{\Omega}^*(N)\to\widehat{\Omega}^*(M)$.
\end{lemma}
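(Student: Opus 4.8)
The plan is to verify directly that the pullback $f^*\omega$ of a bounded form $\omega$ on $N$ is again bounded, so that the usual pullback of differential forms restricts to the asserted map $f^*\colon\widehat{\Omega}^*(N)\to\widehat{\Omega}^*(M)$. Write $|df_x|$ for the operator norm of the derivative $df_x\colon T_xM\to T_{f(x)}N$ with respect to the Riemannian metrics; the hypothesis that $f$ has bounded derivative means $\sup_{x\in M}|df_x|<\infty$.

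First I would record the elementary pointwise estimate for pullbacks of alternating forms. For a $p$-form $\omega$ on $N$ and a point $x\in M$, evaluating $(f^*\omega)_x$ on an orthonormal basis $v_1,\dots,v_p$ of (a $p$-plane in) $T_xM$ gives $\omega_{f(x)}(df_x v_1,\dots,df_x v_p)$, and since $|df_x v_i|\le|df_x|$ we obtain
\[
  |f^*\omega|_x\le|df_x|^p\,|\omega|_{f(x)}.
\]
If $\omega\in\widehat{\Omega}^p(N)$ then $|\omega|$ is bounded on $N$, so the right-hand side is bounded uniformly in $x$, whence $|f^*\omega|$ is bounded on $M$. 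Next, since pullback commutes with the exterior derivative we have $d(f^*\omega)=f^*(d\omega)$, and $d\omega$ is bounded on $N$ by the definition of $\widehat{\Omega}^p(N)$; applying the same estimate with $d\omega$ in place of $\omega$ yields $|d(f^*\omega)|_x\le|df_x|^{p+1}\,|d\omega|_{f(x)}$, which is again bounded. Hence $f^*\omega\in\widehat{\Omega}^p(M)$. Linearity of $f^*$ and the identity $d\circ f^*=f^*\circ d$ are the standard properties of the pullback of forms, so $f^*$ is a well-defined morphism of differential graded algebras, and in particular induces the claimed map on bounded de Rham cohomology.

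There is essentially no obstacle here; the one point deserving a line of care is the operator-norm estimate $|f^*\omega|_x\le|df_x|^p\,|\omega|_{f(x)}$, which is where the boundedness of $df$ enters and which follows from evaluating on orthonormal bases together with $|df_x v|\le|df_x|\,|v|$.
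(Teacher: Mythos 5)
Your proof is correct and is exactly the standard verification the paper has in mind: the paper states this lemma without proof as an ``obvious fact,'' and your pointwise estimate $|f^*\omega|_x\le|df_x|^p\,|\omega|_{f(x)}$ together with $d\circ f^*=f^*\circ d$ is the intended justification. Nothing further is needed.
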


Now we consider a cocompact and properly discontinuous action of a discrete group $G$ on a manifold $M$, and choose a fundamental domain $D\subset M$. A Riemannian metric of $M$ will be chosen to be the lift of a Riemannian metric of $M/G$. We assume that $M/G$ is oriented. Then in particular, the fundamental domain $D$ is oriented. We define the \emph{integral} of a bounded differential form on $M$ by
\begin{equation}
  \label{integral cochain}
  \int_M\colon\widehat{\Omega}^n(M)\to \ell^\infty(G),\quad\left(\int_M\omega\right)(g)=\int_{gD}\omega.
\end{equation}
We may think of the above integral as the external transfer of the covering $M\to M/G$. Note that we can similarly define the integral for $\partial M$ by using a fundamental domain $D\cap\partial M$ of $\partial M$. We prove Stokes' theorem.

\begin{proposition}
  \label{Stokes}
  For $\omega\in\widehat{\Omega}^{n-1}(M)$, we have
  \[
    \int_Md\omega=\int_{\partial M}\omega\quad\text{in}\quad\ell^\infty(G)_G.
  \]
\end{proposition}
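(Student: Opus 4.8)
The plan is to reduce the claim to the ordinary Stokes' theorem applied to the fundamental domain $D$ together with a bookkeeping of the boundary contributions. Fix $\omega\in\widehat{\Omega}^{n-1}(M)$. Since $D$ is a compact manifold with corners (being a finite simplicial complex built from finitely many top simplices) and $\omega$ is smooth on a neighborhood of $D$, the classical Stokes' theorem gives
\[
  \Bigl(\int_M d\omega\Bigr)(g)=\int_{gD}d\omega=\int_{\partial(gD)}\omega.
\]
By Lemma \ref{boundary}, applied to $gD$, the boundary $\partial(gD)$ decomposes as the union of $g E$ and the facets $gF_i^{\pm}$, $gF_j^0$, each taken with the orientation induced from $gD$. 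So
\[
  \Bigl(\int_M d\omega\Bigr)(g)=\int_{gE}\omega+\sum_{i=1}^k\Bigl(\int_{gF_i^+}\omega+\int_{gF_i^-}\omega\Bigr)+\sum_{j=1}^l\int_{gF_j^0}\omega,
\]
where the integrals over interior facets use the orientation induced from $gD$.

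The term $\int_{gE}\omega$ is exactly $\bigl(\int_{\partial M}\omega\bigr)(g)$ by the definition of the integral for $\partial M$ using the fundamental domain $E=D\cap\partial M$, so it remains to show that the facet sum vanishes in $\ell^\infty(G)_G$. The key geometric input is Lemma \ref{orientation}: the facet $F=gD\cap hD$ inherits opposite orientations from $gD$ and from $hD$. I will use this to cancel the facet contributions in pairs. Concretely, $gF_i^+ = gD\cap gs_iD$ is a facet shared by $gD$ and $(gs_i)D$; with the orientation from $gD$ its $\omega$-integral is $\int_{gF_i^+}\omega$, whereas the same geometric facet appears in $\partial((gs_i)D)$ as $(gs_i)F_i^-=(gs_i)D\cap gD$ carrying the opposite orientation, contributing $-\int_{gF_i^+}\omega$. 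Summing the facet contributions over all $g\in G$ — which is legitimate at the level of coinvariants, since the function $g\mapsto\int_{gF_i^+}\omega$ differs from $g\mapsto\int_{gs_iF_i^+}\omega$ by an element of the span of $\{\phi-s_i^{-1}\phi\}$ — shows the $S^+$/$S^-$ contributions telescope to zero in $\ell^\infty(G)_G$. For an order-two generator $t_j$, the facet $gF_j^0=gD\cap gt_jD$ is shared by $gD$ and $(gt_j)D$, and since $t_j^2=1$ the two group translates visit this facet symmetrically; Lemma \ref{orientation} again forces the $g$ and $gt_j$ contributions to be negatives of each other, so after passing to coinvariants these cancel as well.

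To make the telescoping rigorous one must check that all the relevant functions of $g$ lie in $\ell^\infty(G)$: this follows because $|\omega|$ is bounded and, the metric on $M$ being the lift of a metric on the compact quotient $M/G$, each facet $gF$ has uniformly bounded $(n-1)$-volume, so $\bigl|\int_{gF}\omega\bigr|$ is bounded uniformly in $g$. The main obstacle is the orientation bookkeeping: one has to match, for each generator $s\in S$, the facet $D\cap sD$ of $D$ with the correspondingly oriented facet $sD\cap D = s(D\cap s^{-1}D)$ of $sD$, and verify that the difference of the two resulting $\ell^\infty(G)$-functions is precisely a coboundary of the form $\phi - s^{-1}\phi$ (up to sign dictated by Lemma \ref{orientation}), and likewise handle the order-two case where $s=s^{-1}$. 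Once this identification is set up carefully, the facet sum is visibly zero in $\ell^\infty(G)_G$, and the proposition follows.
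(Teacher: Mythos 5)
Your proposal is correct and follows essentially the same route as the paper: apply the classical Stokes' theorem on each translate $gD$, split $\partial(gD)$ into the boundary facet $gE$ and the interior facets via Lemma \ref{boundary}, and use Lemma \ref{orientation} to recognize the paired facet contributions as coboundaries $\phi-s^{-1}\phi$ (with the factor $\tfrac12$ trick for the order-two generators), hence zero in $\ell^\infty(G)_G$. The paper carries out exactly the orientation bookkeeping you describe, writing $\phi_i^-(g)=-\phi_i^+(gs_i^{-1})$ so that $\phi_i^++\phi_i^-=\phi_i^+-s_i^{-1}\phi_i^+$ and $\phi_j^0=\tfrac12(\phi_j^0-t_j^{-1}\phi_j^0)$.
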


\begin{proof}
  We consider the facets of $D$ described in Lemma \ref{boundary}. Define $\phi_i^\pm,\,\phi^0_j\in\ell^\infty(G)$ by
  \[
    \phi_i^\pm(g)=\int_{gF^\pm_i}\omega\quad\text{and}\quad\phi^0_j(g)=\int_{gF^0_i}\omega
  \]
  for $i=1,2,\ldots,k$ and $j=1,2,\ldots,l$, where the orientations of $gF^\pm_i$ and $gF^0_i$ are induced from $gD$. Then by Lemma \ref{boundary} and the usual Stokes' theorem, we have
  \[
    \int_{gD}d\omega=\int_{gE}\omega+\sum_{i=1}^k(\phi_i^+(g)+\phi_i^-(g))+\sum_{j=1}^l\phi^0_j(g),
  \]
  where the orientation of $gE$ is induced from $gD$. Since $gF_i^-=gs_i^{-1}F_i^+$, it follows from Lemma \ref{orientation} that $\phi_i^-(g)=-\phi_i^+(gs_i^{-1})$. Then we get
  \[
    \phi_i^++\phi_i^-=\phi_i^+-s_i^{-1}\phi_i^+.
  \]
  Quite similarly, we can also get
  \[
    \phi_j^0=\frac{1}{2}(\phi_j^0-t_j^{-1}\phi_j^0).
  \]
  Thus since $E=D\cap\partial M$ is a fundamental domain of $\partial M$, we obtain
  \[
    \int_Md\omega=\int_{\partial M}\omega+\sum_{i=1}^k(\phi_i^+-s_i^{-1}\phi_i^+)+\sum_{j=1}^l\frac{1}{2}(\phi_j^0-t_j^{-1}\phi_j^0).
  \]
  Therefore the proof is finished.
\end{proof}

The following is immediate from Proposition \ref{Stokes}.

\begin{corollary}
  \label{integral cohomology}
  If $M$ is without boundary, then the integral \eqref{integral cochain} induces a map
  \[
    \int_M\colon\widehat{H}^n(M)\to\ell^\infty(G)_G.
  \]
\end{corollary}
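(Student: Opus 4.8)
The statement to prove is Corollary \ref{integral cohomology}: if $M$ is without boundary, then the integral \eqref{integral cochain} induces a map $\widehat{H}^n(M)\to\ell^\infty(G)_G$.

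The plan is to show that the chain-level integral $\int_M\colon\widehat{\Omega}^n(M)\to\ell^\infty(G)$, post-composed with the projection $\ell^\infty(G)\to\ell^\infty(G)_G$, kills exact forms, and hence descends to the cohomology. Concretely, let $q\colon\ell^\infty(G)\to\ell^\infty(G)_G$ denote the quotient map. Since $\widehat{\Omega}^*(M)$ is concentrated in degrees $0,\dots,n$ and $M$ is closed (so $\partial M=\emptyset$), every element of $\widehat{H}^n(M)$ is represented by some $\omega\in\widehat{\Omega}^n(M)$, and two representatives differ by $d\eta$ for some $\eta\in\widehat{\Omega}^{n-1}(M)$.

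The key step is simply to invoke Proposition \ref{Stokes}. When $\partial M=\emptyset$, the right-hand side $\int_{\partial M}\eta$ is the zero element of $\ell^\infty(G)_G$ (the fundamental domain of the empty manifold is empty, so the defining cochain is identically zero), and therefore Proposition \ref{Stokes} gives $q\left(\int_M d\eta\right)=0$ in $\ell^\infty(G)_G$ for every $\eta\in\widehat{\Omega}^{n-1}(M)$. Consequently, if $\omega$ and $\omega'$ are cohomologous bounded $n$-forms, say $\omega'-\omega=d\eta$, then $q\left(\int_M\omega'\right)-q\left(\int_M\omega\right)=q\left(\int_M d\eta\right)=0$, so the composite $q\circ\int_M$ sends a class $[\omega]\in\widehat{H}^n(M)$ to the well-defined element $q\left(\int_M\omega\right)\in\ell^\infty(G)_G$, independent of the representative. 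This is exactly the induced map claimed. One should also note that, since there are no $(n+1)$-forms, there is no cocycle condition to check — every bounded $n$-form is automatically closed in $\widehat{\Omega}^*(M)$, so $\widehat{H}^n(M)=\widehat{\Omega}^n(M)/d\widehat{\Omega}^{n-1}(M)$ and the argument above covers everything.

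There is essentially no obstacle here: the corollary is a purely formal consequence of Stokes' theorem (Proposition \ref{Stokes}) together with the observation that the boundary term vanishes when $M$ is closed. The only point worth a sentence of care is making explicit that "the integral over $\partial M$" is the zero map when $\partial M=\emptyset$, so that Proposition \ref{Stokes} literally reads $q\left(\int_M d\eta\right)=0$; everything else is the standard "a chain map into a quotient that kills coboundaries descends to cohomology" reasoning. I would present the proof in two or three sentences along exactly these lines.
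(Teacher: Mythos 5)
Your proof is correct and is exactly the argument the paper intends: the paper states the corollary as ``immediate from Proposition \ref{Stokes},'' and your write-up simply spells out that the boundary term vanishes when $\partial M=\emptyset$, so exact bounded $n$-forms integrate to zero in $\ell^\infty(G)_G$ and the integral descends. No gaps; same approach.
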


By considering $n$-forms with support in $gD$, we can easily see that the integral in bounded cohomology is always surjective. We give two supporting examples for Conjecture \ref{conjecture}.

\begin{proposition}
  Conjecture \ref{conjecture} is true for $G$ finite.
\end{proposition}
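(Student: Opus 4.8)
The plan is to reduce the statement to the classical fact that integration induces an isomorphism $H^n_{\dR}(M)\xrightarrow{\cong}\R$ for a compact connected oriented $n$-manifold $M$. When $G$ is finite, the action being cocompact and properly discontinuous forces $M$ to be compact, since $M\to M/G$ is then a finite-sheeted covering of the compact manifold $M/G$; moreover $M$ is orientable, as it covers the oriented manifold $M/G$ by a local diffeomorphism. Hence every differential form on $M$ is bounded, so $\widehat{\Omega}^*(M)=\Omega^*(M)$ and $\widehat{H}^n(M)=H^n_{\dR}(M)\cong\R$, while $\ell^\infty(G)_G\cong\R$ by Corollary \ref{G finite}. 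So both the source and the target of the integral, on cohomology, are one-dimensional, and it suffices to prove that the integral is nonzero.

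To this end I would describe the isomorphism $\ell^\infty(G)_G\cong\R$ explicitly. As $G$ is finite, $\ell^\infty(G)$ is the space of all functions $G\to\R$, and the summation functional $\Sigma(\phi)=\sum_{g\in G}\phi(g)$ is $G$-invariant because the $G$-action on $\ell^\infty(G)$ merely permutes the values of $\phi$; thus $\Sigma$ descends to a linear map $\bar{\Sigma}\colon\ell^\infty(G)_G\to\R$, which is nonzero (the class of the indicator function of $1\in G$ maps to $1$). Since $\ell^\infty(G)_G$ is one-dimensional by Corollary \ref{G finite}, $\bar{\Sigma}$ is an isomorphism.

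It then remains to compute the composite $\widehat{H}^n(M)=H^n_{\dR}(M)\xrightarrow{\int_M}\ell^\infty(G)_G\xrightarrow{\bar{\Sigma}}\R$. By the definition \eqref{integral cochain} of the integral, this composite sends a class $[\omega]$ to $\sum_{g\in G}\int_{gD}\omega$. The translates $gD$ cover $M$ and have pairwise disjoint interiors, $\partial D$ is a subcomplex of dimension at most $n-1$ and hence of measure zero, and the $G$-action preserves the orientation of $M$ pulled back from $M/G$; therefore this sum equals the ordinary integral $\int_M\omega$ over the oriented manifold $M$. Since the ordinary integral induces an isomorphism $H^n_{\dR}(M)\to\R$ for compact connected oriented $M$, the composite $\bar{\Sigma}\circ\int_M$ is an isomorphism, and since $\bar{\Sigma}$ is an isomorphism, so is $\int_M\colon\widehat{H}^n(M)\to\ell^\infty(G)_G$. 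The only work involved is the elementary bookkeeping just indicated — that $M$ is compact and oriented, that bounded forms are all forms there, and that the tiles $gD$ reassemble $\int_M$ — which poses no genuine difficulty.
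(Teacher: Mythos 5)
Your proof is correct and follows essentially the same route as the paper: both reduce to the observation that for finite $G$ the manifold $M$ is compact, so $\widehat{H}^n(M)\cong\R$ and $\ell^\infty(G)_G\cong\R$ are one-dimensional, and then one only needs the integral to be nonzero. The sole cosmetic difference is that the paper invokes the general surjectivity of the integral (via an $n$-form supported in a single tile $gD$), whereas you verify nonvanishing by composing with the summation isomorphism $\ell^\infty(G)_G\cong\R$ and identifying the composite with the classical integral over $M$.
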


\begin{proof}
  If $G$ is finite, then $M$ is compact, and so $\widehat{H}^n(M)$ coincides with the usual $n$-th de Rham cohomology of $M$, which is isomorphic with $\R$. On the other hand, by Corollary \ref{G finite}, we have $\ell^\infty(G)_G\cong\R$. Then since the integral in bounded cohomology is surjective as mentioned above, it is actually an isomorphism, as stated.
\end{proof}

\begin{proposition}
  \label{M=R}
  Conjecture \ref{conjecture} is true for $M=\R$ and $G=\Z$, where $\Z$ acts on $\R$ by translation.
\end{proposition}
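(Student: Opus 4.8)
The plan is to compute both sides explicitly and exhibit an isomorphism $\widehat{H}^1(\R)\to\ell^\infty(\Z)_\Z$ that is shown to coincide with the integral \eqref{integral cochain}. We already know from Corollary \ref{integral cohomology} that the integral is a well-defined surjection onto $\ell^\infty(\Z)_\Z$, so it remains only to prove injectivity, i.e. that a bounded $1$-form $\omega=f\,dt$ on $\R$ (with $f$ and $f'$ bounded) whose integral $g\mapsto\int_{g}^{g+1}f\,dt$ is zero in $\ell^\infty(\Z)_\Z$ must be exact in the bounded de Rham complex, meaning $\omega=dh$ for some bounded function $h$ with bounded derivative. Note the primitive $h(t)=\int_0^t f\,ds$ automatically has bounded derivative $h'=f$, so the entire content is to show that when $\int_M\omega$ is a coboundary, $h$ can be chosen bounded.

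First I would identify, using Proposition \ref{finite non-zero} or more directly Whyte's criterion quoted in its proof, when an element $\psi\in\ell^\infty(\Z)$ is zero in $\ell^\infty(\Z)_\Z$: for $\Z$ with its standard word metric this happens precisely when the partial sums $\sum_{k=0}^{N}\psi(k)$ and $\sum_{k=-N}^{-1}\psi(k)$ are bounded uniformly in $N$ — equivalently, $\psi=\delta-\tau\delta$ (the $\tau$-shift) for some $\delta\in\ell^\infty(\Z)$, where $\delta(N)$ is (up to a constant) the partial sum $\sum_{k<N}\psi(k)$. Applying this to $\psi(k)=\int_k^{k+1}f\,dt$: the hypothesis that $\int_M\omega=0$ in $\ell^\infty(\Z)_\Z$ gives exactly that the sequence $h(N)=\int_0^N f\,dt$ is bounded over $N\in\Z$.

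Next I would upgrade boundedness along the integers to boundedness on all of $\R$: for $t\in[N,N+1]$ we have $|h(t)-h(N)|\le\int_N^{N+1}|f|\,ds\le\|f\|_\infty$, so $h$ itself is bounded, and since $h'=f$ is bounded, $h\in\widehat{\Omega}^0(\R)$ and $\omega=dh$ is a coboundary in $\widehat\Omega^*(\R)$, hence zero in $\widehat H^1(\R)$. This shows the integral is injective, and combined with surjectivity it is an isomorphism, proving Conjecture \ref{conjecture} in this case. I should also record the easy facts that $\widehat H^0(\R)=\R$ (bounded functions with bounded, and automatically zero, derivative that are closed are constants — a $0$-form is closed iff $f'=0$) and that the integral is then seen to be a genuine chain-level description rather than a zig-zag.

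The main obstacle is the precise form of Whyte's vanishing criterion for $\ell^\infty(\Z)_\Z$ and translating it into the clean statement ``$\psi$ vanishes in $\ell^\infty(\Z)_\Z$ iff its two-sided partial sums are bounded.'' This is where one must be a little careful about the $r$ in the inequality $|\sum_{g\in S}\phi(g)|\le C\cdot\sharp\{g:0<d(g,S)\le r\}$ and about which finite subsets $S$ to test against; taking $S$ to be intervals $[0,N]$ and $[-N,-1]$ gives the boundedness of partial sums, while the reverse implication is a standard telescoping argument realizing $\psi$ as $\delta-\tau\delta$. Everything after that — passing from integer samples to all real $t$, and checking that the primitive has bounded derivative — is routine.
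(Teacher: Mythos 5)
Your proposal is correct and follows essentially the same route as the paper: both reduce injectivity to showing that the primitive $h(x)=\int_0^xf(t)\,dt$ lies in $\widehat{\Omega}^0(\R)$, by first bounding $h$ at the integers using the hypothesis that $\int_M\omega$ vanishes in $\ell^\infty(\Z)_\Z$ and then interpolating via $|h(x)-h(n)|\le\|f\|_\infty$. The only (cosmetic) difference is that the paper unpacks the vanishing condition directly by the telescoping identity $\phi-g^n\phi=\psi-g\psi$ to write $\int_M\omega=\phi-g\phi$ for the generator, whereas you route through Whyte's criterion; both yield the same boundedness of $h$ on $\Z$.
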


\begin{proof}
  We choose the interval $[0,1]\subset\R$ as a fundamental domain. Let $g=1\in\Z$. Suppose that
  \begin{equation}
    \label{kernel}
    \int_\R f(x)dx=\phi-g\phi
  \end{equation}
  for a bounded function $f(x)$ on $\R$ and $\phi\in\ell^\infty(\Z)$, where the integral is taken in the sense of \eqref{integral cochain}. The equation \eqref{kernel} is equivalent to the fact that the $1$-form $f(x)dx$ belongs to the kernel of the integral in bounded cohomology  because
  \[
    \phi-g^n\phi=(\phi+g\phi+\cdots+g^{n-1}\phi)-g(\phi+g\phi+\cdots+g^{n-1}\phi).
  \]
  Note that $(\phi-g\phi)(i)=\phi(i)-\phi(i+1)$. Now we define
  \[
    h(x)=\int_0^xf(t)dt.
  \]
  To see that the integral in bounded cohomology is injective, it is sufficient to show that $h(x)\in\widehat{\Omega}^0(\R)$. Since $dh(x)=f(x)dx$, $dh(x)$ is bounded. For $0\le n\le x<n+1$, we have
  \[
    h(x)=\sum_{i=0}^{n-1}\int_i^{i+1}f(t)dt+\int_n^xf(t)dt=\phi(0)-\phi(n)+\int_n^xf(t)dt.
  \]
  Since $f(x)$ is bounded, $\int_n^xf(t)dt$ is bounded too as $x$ and $n$ vary. Then $h(x)$ is bounded for $x\ge 0$. Quite similarly, we can show that $h(x)$ is bounded for $x<0$ too, and so we get $h(x)\in\widehat{\Omega}^0(\R)$. Thus we obtain the injectivity. Since the integral in bounded cohomology is surjective as mentioned above, it is an isomorphism.
\end{proof}




\section{Poincar\'e-Hopf theorem} \label{Poincare-Hopf theorem}

In this section, we prove Theorems \ref{Poincare-Hopf} and \ref{zeros}. Throughout this section, let $M$ be a connected manifold of dimension $n$ equipped with a cocompact and properly discontinuous action of a discrete group $G$ such that $M/G$ is oriented. The metric of $M$ will be the lift of a metric of $M/G$.

Let $\Phi$ denote a representative of the Thom class of $M/G$. Then as in \cite{BT}, the support of $\Phi$ is compactly supported, and so $\Phi$ is a bounded $n$-form on $T(M/G)$. Let $\pi\colon M\to M/G$ denote the projection. Then the derivative of $\pi$ is bounded, and so by Lemma \ref{induced map}, we get the induced map $\pi^*\colon\widehat{\Omega}^*(T(M/G))\to\widehat{\Omega}^*(TM)$. In particular, $\pi^*(\Phi)$ is a bounded $n$-form on $TM$. Note that $\pi^*(\Phi)$ represents the Thom class of $M$ in bounded cohomology. Let $v$ be a vector field on $M$ with $|dv|$ bounded. Then by Lemma \ref{induced map}, $v^*(\pi^*(\Phi))$ is a bounded $n$-form on $M$, and so we can define the index of $v$ by
\[
  \ind(v)=\int_Mv^*(\pi^*(\Phi))\in\ell^\infty(G)_G.
\]
We remark that the index $\ind(v)$ is independent of the choice of a representative $\Phi$ of the Thom class of $M/G$. Indeed, if $\Psi$ is another representative of the Thom class of $M/G$, then $\Phi-\Psi=d\alpha$ for some compactly supported $(n-1)$-form $\alpha$ on $T(M/G)$, where $\Psi$ is compactly supported. Hence we get $\pi^*(\Phi)-\pi^*(\Psi)=d\pi^*(\alpha)$, where all differential forms are bounded, and so by Corollary \ref{integral cohomology}, the indices of $v$ defined by $\Phi$ and $\Psi$ are equal. We also remark that by Proposition \ref{amenable}, the index of a bounded vector field on $M$ is always zero whenever $G$ is not amenable. (cf. \cite[Theorem 2]{W})

We now show some properties of the index. Let $v_0$ denote the zero vector field, that is, the zero section $M\to TM$. Then $v_0^*(\pi^*(\Phi))$ is a representative of the Euler class $e(M)$ in bounded cohomology, which was considered by Weinberger \cite{W}.

\begin{proposition}
  \label{Euler}
  There is an equality
  \[
    \int_Me(M)=\chi(M/G)\mathbbm{1}.
  \]
\end{proposition}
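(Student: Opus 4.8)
The plan is to reduce the computation of $\int_M e(M)$ to the classical Poincaré–Hopf theorem on the compact manifold $M/G$. Recall that $e(M)$ is represented by $v_0^*(\pi^*(\Phi))$, where $v_0\colon M\to TM$ is the zero section and $\Phi$ represents the Thom class of $M/G$. The key observation is that the diagram relating $v_0$, $\pi$, and the zero section of $T(M/G)$ commutes: writing $\bar{v}_0\colon M/G\to T(M/G)$ for the zero section of the base, we have $\pi\circ\bar{v}_0 = (T\pi)\circ v_0 \circ(\text{nothing})$ — more precisely, since $\pi$ is a local diffeomorphism, $v_0^*(\pi^*\Phi) = (\pi\circ v_0)^*\Phi$ and $\pi\circ v_0 = \bar{v}_0\circ\pi$ as maps $M\to T(M/G)$ (both send $x$ to the zero vector over $\pi(x)$). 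Hence $v_0^*(\pi^*\Phi) = \pi^*(\bar{v}_0^*\Phi)$, and $\bar{v}_0^*\Phi$ is precisely a representative of the Euler class $e(M/G)$ of the compact oriented manifold $M/G$ in ordinary de Rham cohomology.

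With this identification in hand, the next step is to compare the integral $\int_M$ with the ordinary integral $\int_{M/G}$. For any $n$-form $\eta$ on $M/G$, I would compute
\[
  \left(\int_M \pi^*\eta\right)(g) = \int_{gD}\pi^*\eta = \int_D \pi^*\eta = \int_{M/G}\eta,
\]
where the middle equality uses that $g\colon D\to gD$ is an orientation-preserving diffeomorphism (the metric and orientation on $M$ are lifted from $M/G$, so the $G$-action is by orientation-preserving isometries) and $\pi^*\eta$ is $G$-invariant, and the last equality holds because $D$ is a fundamental domain and $\pi|_{\operatorname{Int}(D)}$ is an orientation-preserving diffeomorphism onto a full-measure open subset of $M/G$. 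Therefore $\int_M\pi^*\eta$ is the constant function with value $\int_{M/G}\eta$, i.e. $\int_M\pi^*\eta = \big(\int_{M/G}\eta\big)\mathbbm{1}$ in $\ell^\infty(G)_G$.

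Applying this with $\eta = \bar{v}_0^*\Phi$ gives
\[
  \int_M e(M) = \int_M \pi^*(\bar{v}_0^*\Phi) = \left(\int_{M/G}\bar{v}_0^*\Phi\right)\mathbbm{1} = \left(\int_{M/G} e(M/G)\right)\mathbbm{1} = \chi(M/G)\mathbbm{1},
\]
where the final equality is the classical fact that the integral of (a representative of) the Euler class of a closed oriented manifold equals its Euler characteristic. This completes the proof.

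I expect the main technical point — though not a deep one — to be the careful bookkeeping of orientations in the equality $\int_{gD}\pi^*\eta = \int_D\pi^*\eta = \int_{M/G}\eta$: one must be sure that the $G$-action preserves the orientation on $D$ induced from the lifted orientation on $M/G$, and that the measure-zero set $\partial D$ does not affect the integral of the smooth (bounded) form $\pi^*\eta$. Both follow from the setup in Section~\ref{Fundamental domain} (in particular from the fact that $D$ is built by lifting a triangulation of $M/G$), so this is routine but worth spelling out. Everything else is a formal consequence of the commuting square of zero sections and Corollary~\ref{integral cohomology}.
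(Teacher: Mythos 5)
Your proposal is correct and follows essentially the same route as the paper: identify $v_0^*(\pi^*\Phi)$ with the pullback of a representative of $e(M/G)$, use that $\pi$ restricted to the interior of each $gD$ is an orientation-preserving diffeomorphism onto a full-measure subset of $M/G$ to get $\int_{gD}v_0^*(\pi^*\Phi)=\int_{M/G}\bar{v}_0^*(\Phi)=\chi(M/G)$ for every $g$, and conclude. The paper's proof is just a terser version of the same computation, so no further comments are needed.
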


\begin{proof}
  Let $\bar{v}_0$ denote the zero vector field on $M/G$, so $v_0$ is the lift of $\bar{v}_0$. Since the projection $\pi\colon\mathrm{Int}(gD)\to M/G-\pi(\partial(gD))$ is a diffeomorphism and both $\partial(gD)$ and $\pi(\partial(gD))$ have measure zero, we have
  \[
    \int_{gD}v_0^*(\pi^*(\Phi))=\int_{M/G}\bar{v}_0^*(\Phi)=\int_{M/G}e(M/G)=\chi(M/G).
  \]
  Thus the proof is finished.
\end{proof}

\begin{lemma}
  \label{homotopy}
  If vector fields $v$ and $w$ on $M$ with $|dv|$ and $|dw|$ bounded are homotopic by a homotopy with bounded derivative, then
  \[
    \ind(v)=\ind(w).
  \]
\end{lemma}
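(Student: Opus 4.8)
The plan is to reduce the statement to Corollary \ref{integral cohomology}, i.e.\ to the fact that the integral in bounded cohomology kills exact bounded forms, exactly as in the classical Poincar\'e--Hopf argument. Let $H\colon M\times[0,1]\to TM$ be the homotopy from $v$ to $w$, with bounded derivative. Equip $M\times[0,1]$ with the product metric; since $[0,1]$ is compact, the projection $p\colon M\times[0,1]\to M$ has bounded derivative, and $p$ followed by $\pi\colon M\to M/G$ does too, so $H^*(\pi^*\Phi)\in\widehat{\Omega}^n(M\times[0,1])$ by Lemma \ref{induced map}. Here I use that $\Phi$ is a compactly supported (hence bounded) $n$-form on $T(M/G)$, as recorded before Proposition \ref{Euler}.

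Next I would run the usual homotopy/chain-homotopy formula. Write $H^*(\pi^*\Phi)=\beta+dt\wedge\gamma$ where $\beta$ is the part with no $dt$ and $\gamma\in\widehat{\Omega}^{n-1}(M)$ depends on $t$; set $\alpha=\int_0^1\gamma\,dt\in\Omega^{n-1}(M)$. Since $\pi^*\Phi$ is closed in $\widehat{\Omega}^*(TM)$ and $H$ has bounded derivative, the standard computation gives $d\alpha=j_1^*H^*(\pi^*\Phi)-j_0^*H^*(\pi^*\Phi)=w^*(\pi^*\Phi)-v^*(\pi^*\Phi)$, where $j_s\colon M\to M\times[0,1]$ is the inclusion at level $s$. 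The only thing that is not entirely formal is that $\alpha$ is \emph{bounded}: one needs $|\alpha|$ and $|d\alpha|$ bounded. But $|d\alpha|=|w^*(\pi^*\Phi)-v^*(\pi^*\Phi)|$ is bounded since both terms are bounded $n$-forms, and $|\alpha|\le\int_0^1|\gamma|\,dt\le\sup|\gamma|$, which is bounded because $\gamma$ is (a component of) the pullback of $\Phi$ under a map with bounded derivative, i.e.\ $|\gamma|\le\|H^*(\pi^*\Phi)\|_\infty\le C\,\|\Phi\|_\infty$ with $C$ controlled by the bound on $|dH|$. Hence $\alpha\in\widehat{\Omega}^{n-1}(M)$.

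Finally, apply $\int_M$: by Corollary \ref{integral cohomology} (equivalently Proposition \ref{Stokes} with $\partial M=\emptyset$), $\int_M d\alpha=0$ in $\ell^\infty(G)_G$, so $\int_M w^*(\pi^*\Phi)=\int_M v^*(\pi^*\Phi)$ in $\ell^\infty(G)_G$, which is precisely $\ind(w)=\ind(v)$.

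The main obstacle is the uniform boundedness of the primitive $\alpha$ and of the intermediate form $\gamma$ — that is, verifying that the classical chain homotopy stays inside the bounded de Rham complex. This is where the hypothesis that the homotopy has \emph{bounded} derivative is used in an essential way: it bounds the fiberwise norm of $H^*(\pi^*\Phi)$ in terms of $\|\Phi\|_\infty$ and hence bounds $\gamma$ and $\alpha$, while boundedness of $d\alpha$ comes for free from the assumed boundedness of $v^*(\pi^*\Phi)$ and $w^*(\pi^*\Phi)$. Everything else is the standard de Rham homotopy argument transported verbatim to $\widehat{\Omega}^*$.
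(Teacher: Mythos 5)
Your proof is correct, and it reaches the conclusion by a genuinely different (though closely related) route from the paper. The paper treats $M\times[0,1]$ as a manifold with boundary carrying the product $G$-action, notes that $\int_{M\times[0,1]}dv_t^*(\pi^*\Phi)=\int_{M\times[0,1]}v_t^*(\pi^*(d\Phi))=0$ because $\Phi$ is closed, and then applies Proposition \ref{Stokes} in its with-boundary form to identify this integral with $\int_{M\times 1}w^*(\pi^*\Phi)-\int_{M\times 0}v^*(\pi^*\Phi)$ in $\ell^\infty(G)_G$; this is why Sections \ref{Fundamental domain} and \ref{Integral in bounded cohomology} were set up for manifolds with boundary. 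You instead fiber-integrate over the $[0,1]$ factor to produce an explicit primitive $\alpha$ on $M$ with $d\alpha=w^*(\pi^*\Phi)-v^*(\pi^*\Phi)$, verify that $\alpha$ lies in $\widehat{\Omega}^{n-1}(M)$, and then invoke only the boundaryless case, Corollary \ref{integral cohomology}. The trade-off is clear: the paper's argument is shorter and never has to exhibit or bound a primitive, but it leans on the full with-boundary Stokes machinery; your argument needs only the closed-manifold statement but must check boundedness of $\alpha$ and $\gamma$, which you do correctly --- $|\gamma|$ is controlled by $\|H^*(\pi^*\Phi)\|_\infty$ via Lemma \ref{induced map}, and $|d\alpha|$ is bounded because $v^*(\pi^*\Phi)$ and $w^*(\pi^*\Phi)$ are. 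Your version also makes explicit where the bounded-derivative hypothesis on the homotopy enters, which the paper leaves implicit inside Lemma \ref{induced map}.
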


\begin{proof}
  Let $v_t\colon M\times[0,1]\to TM$ be a homotopy with bounded derivative such that $v_0=v$ and $v_1=w$. Since the induced maps $v_t^*\colon\widehat{\Omega}^*(TM)\to\widehat{\Omega}^*(M\times[0,1])$ and $\pi^*\colon\widehat{\Omega}^*(T(M/G))\to\widehat{\Omega}^*(TM)$ commute with the differential, we have
  \[
    \int_{M\times[0,1]}dv_t^*(\pi^*(\Phi))=\int_{M\times[0,1]}v_t^*(\pi^*(d\Phi))=0
  \]
  where $\Phi$ is a closed $n$-form representing the Thom class of $T(M/G)$. On the other hand, by Proposition \ref{Stokes}, we have
  \[
    \int_{M\times[0,1]}dv_t^*(\pi^*(\Phi))=\int_{M\times 1}w^*(\pi^*(\Phi))-\int_{M\times 0}v^*(\pi^*(\Phi)).
  \]
  Thus the statement is proved.
\end{proof}

\begin{proposition}
  \label{index}
  Let $v$ be a bounded vector field on $M$. Then we have
  \[
    \ind(v)=\ind(v_0).
  \]
\end{proposition}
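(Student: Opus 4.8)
The plan is to show that a bounded vector field $v$ is homotopic, through a homotopy with bounded derivative, to the zero vector field $v_0$, and then invoke Lemma \ref{homotopy}. The obvious candidate homotopy is the linear scaling $v_t = (1-t)v$, i.e. $v_t(x) = (1-t)v(x)$ viewed as a map $M\times[0,1]\to TM$. This is a genuine homotopy of vector fields (for each fixed $t$ it is a section $M\to TM$), it satisfies $v_0 = v$ and $v_1 = v_0$, and the only thing to check is that its derivative is bounded.

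The key step is the derivative estimate. Writing things in local coordinates coming from the lift of a finite atlas on the compact manifold $M/G$ — so that the transition functions and their derivatives are uniformly bounded — the map $v_t$ in such a chart has the form $(x,t)\mapsto (x,(1-t)v(x))$. Its derivative involves the derivative of $x\mapsto (1-t)v(x)$, whose $x$-part is $(1-t)\,dv(x)$ and whose $t$-part is $-v(x)$. Both are bounded: the first because $|dv|$ is bounded and $|1-t|\le 1$, the second because $|v|$ is bounded. Since $M$ has bounded geometry (being a cocompact cover) one can make this precise uniformly over $M$ using finitely many charts whose images cover $M/G$, together with the fact that the metric on $TM$ is also the lift of a metric on $T(M/G)$. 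Hence $|dv_t|$ is bounded on $M\times[0,1]$.

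With the homotopy in hand, Lemma \ref{homotopy} applies directly — $v$ and $w=v_0$ are both vector fields with bounded derivative ($|dv|$ bounded by hypothesis, $|dv_0|=0$), and they are joined by a homotopy of bounded derivative — giving $\ind(v)=\ind(v_0)$ as claimed.

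The main obstacle is purely bookkeeping: formulating the boundedness of $dv_t$ correctly. One must be a little careful because ``$|dv|$ bounded'' is a statement about $v$ as a section, and $dv_t$ as the derivative of a map $M\times[0,1]\to TM$ picks up the extra $\partial_t$ direction contributing the term $-v(x)$; so it is precisely the hypothesis that \emph{both} $|v|$ and $|dv|$ are bounded (i.e. that $v$ is bounded, not merely that $|dv|$ is bounded) that is used here. Once that is noted, the estimate is routine given bounded geometry, and nothing deeper is required.
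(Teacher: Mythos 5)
Your proof is correct and is essentially the paper's own argument: the paper uses the linear homotopy $tv$ (yours is $(1-t)v$, the same path reversed) and appeals to Lemma \ref{homotopy}, with the boundedness of the homotopy's derivative following from the boundedness of $|v|$ and $|dv|$ exactly as you spell out.
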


\begin{proof}
  Clearly, $tv$ is a homotopy from $v_0$ to $v$ with bounded derivative. Then by Lemma \ref{homotopy}, the proof is done.
\end{proof}

We consider a mild condition on zeros of a vector field. Let $B_\delta(x)$ denote the open $\delta$-neighborhood of $x\in M$, and let $B_\epsilon(M)$ denote the open $\epsilon$-neighborhood of $M$ in $TM$.

\begin{definition}
  A vector field $v$ on a manifold $M$ is \emph{tame} if there are $\delta>0$ and $\epsilon>0$ such that
  \begin{enumerate}
    \item $B_\delta(x)\cap B_\delta(y)=\emptyset$ for $x\ne y\in\mathrm{Zero}(v)$, and
    \item $v^{-1}(B_\epsilon(M))\subset\bigcup_{x\in\mathrm{Zero}(v)}B_\delta(x)$.
  \end{enumerate}
\end{definition}

\begin{definition}
  \label{strongly tame}
  A vector field $v$ on $M$ is \emph{strongly tame} if it is tame and there is $\delta>0$ such that for each $x\in\mathrm{Zero}(v)$, $B_\delta(x)\subset gD$ for some $g\in G$.
\end{definition}

We also define a tame diffeomorphism, in analogy with tame vector fields.

\begin{definition}
  \label{tame diffeo}
  A diffeomorphism $f\colon M\to M$ is \emph{tame} if there are $\delta>0$ and $\epsilon>0$ such that
  \begin{enumerate}
    \item $B_\delta(x)\cap B_\delta(y)=\emptyset$ for $x\ne y\in\mathrm{Fix}(f)$, and
    \item $d(x,f(x))>\epsilon$ for $x\in M-\bigcup_{y\in\mathrm{Fix}(y)}B_\delta(y)$
  \end{enumerate}
  where $d$ stands for the metric of $M$.
\end{definition}

We prove a technical lemma.

\begin{lemma}
  \label{local index}
  Let $f\colon\R^n\to T\R^n$ be a section of the tangent bundle $T\R^n=\R^n\times\R^n\to\R^n$ such that for some $\delta,\epsilon>0$, $f^{-1}(B_\epsilon(0))\subset B_\delta(0)$. Let $\omega$ be a representative of the Thom class of $T\R^n$ such that $\mathrm{supp}(\omega)\subset\R^n\times B_{\epsilon/2}(0)$. Then we have
  \[
    \mathrm{ind}_0(f)=\int_{B_\delta(0)}f^*(\omega).
  \]
\end{lemma}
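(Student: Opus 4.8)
The plan is to reduce the statement to the classical computation of the local index via the Thom class, exploiting that everything of interest is supported near $0$. First I would recall that the local index $\ind_0(f)$ is by definition the degree of the map
\[
  S^{n-1}_r \to S^{n-1},\qquad x\mapsto \frac{f(x)}{|f(x)|},
\]
for a small sphere of radius $r$ around $0$ on which $f$ does not vanish (any $r<\delta$ works since $f$ has no zeros on $B_\delta(0)\setminus\{0\}$ by the hypothesis $f^{-1}(B_\epsilon(0))\subset B_\delta(0)$, noting $0$ is the only possible zero inside $B_\delta(0)$ with image in $B_\epsilon(0)$). Then I would invoke the standard fact (as in \cite{BT}) that for a representative $\omega$ of the Thom class of $T\R^n$ and a section $s$ of $T\R^n$ whose only zero is at $0$, the pullback $s^*(\omega)$ is a closed compactly supported $n$-form on a neighborhood of $0$ whose total integral equals $\ind_0(s)$; the point of the support condition $\mathrm{supp}(\omega)\subset\R^n\times B_{\epsilon/2}(0)$ is precisely to guarantee that $f^*(\omega)$ is supported inside $B_\delta(0)$, so that $\int_{B_\delta(0)}f^*(\omega)=\int_{\R^n}f^*(\omega)$.

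The key steps, in order, are: (1) observe $\mathrm{supp}(f^*(\omega))\subset f^{-1}(\R^n\times B_{\epsilon/2}(0))\subset f^{-1}(B_\epsilon(0))\subset B_\delta(0)$, so the integral over $B_\delta(0)$ equals the integral over all of $\R^n$; (2) since $f^{-1}(B_{\epsilon/2}(0))$ is precompact, replace $f$ by a compactly supported modification (or restrict attention to $B_\delta(0)$), so that $f$ becomes a proper map and the integral $\int_{\R^n}f^*(\omega)$ equals $\deg(f)$ times $\int_{\R^n}\omega = 1$, where $\deg(f)$ is the degree of $f\colon (\R^n,\R^n\setminus B_\delta(0))\to(\R^n,\R^n\setminus B_{\epsilon/2}(0))$ viewed as a map of one-point compactifications; (3) identify this degree with $\ind_0(f)$ by the homotopy $f/|f|$ on $\partial B_\delta(0)$, using that the radial retraction of $\R^n\setminus\{0\}$ onto a sphere does not change the degree, and that $\omega$, being a Thom class representative, integrates to the degree of the normalized section.

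Concretely I would set it up as follows: let $U=B_\delta(0)$ and let $c\colon \R^n\to\R^n$ be the map collapsing the complement of $B_{\epsilon/2}(0)$ to a point, so $\omega$ descends to a generator of $H^n_c(\R^n)\cong H^n(S^n)$. The composite $c\circ f$ restricted to $U$ is a map $(U,\partial U)\to (S^n,\mathrm{pt})$, and its degree is both $\int_U f^*(\omega)$ (by naturality of integration and the normalization $\int\omega=1$) and $\ind_0(f)$ (by the definition of the local index via the Gauss map, since on $\partial U$ one has $f\ne 0$ and $c\circ f$ is homotopic rel nothing to the normalized map $f/|f|$ followed by a standard identification). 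Chaining these equalities gives the claim.

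The main obstacle I expect is purely bookkeeping: being careful about \emph{which} map's degree is being computed, and verifying that the normalized-section definition of $\ind_0(f)$ agrees with the ``integrate the pulled-back Thom class'' definition when $f$ is merely a continuous (or smooth) section with an isolated, possibly degenerate, zero at $0$ — rather than a transverse one. The cleanest route around this is to note that both sides are unchanged under a small perturbation of $f$ supported in $B_\delta(0)$ making the zero nondegenerate (the left side by definition of the local index, the right side by Stokes' theorem as in Corollary \ref{integral cohomology} applied on $B_\delta(0)$, since the difference of the two pulled-back forms is exact with compactly supported primitive), after which the equality is the textbook statement for transverse sections. No genuinely hard analysis is involved; the content is matching conventions.
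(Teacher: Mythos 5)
Your argument is correct in substance, but it reaches the conclusion by a different route than the paper. The paper never perturbs $f$ and never invokes the transverse/nondegenerate case: it fixes an explicit representative $\eta=p^*(d\rho\wedge q^*(\alpha))$ of the Thom class supported in $\R^n\times B_{\epsilon/2}(0)$, which is globally exact, $\eta=d\,p^*(\rho\cdot q^*(\alpha))$, and whose primitive restricts on $f(\partial B_\delta(0))$ to the pullback of a volume form of $S^{n-1}$; two applications of Stokes' theorem then give $\int_B f^*(\omega)=\int_B f^*(\eta)=\int_{\partial B}f^*p^*q^*(\alpha)=\ind_0(f)$ directly, with the possibly degenerate isolated zero handled as is. Your cleanest route instead perturbs $f$ inside $B_\delta(0)$ to make the zero nondegenerate and quotes the textbook statement for transverse sections, checking that both sides are perturbation-invariant (the right side because the difference of pullbacks is exact with primitive compactly supported in $B_\delta(0)$); your alternative via the collapse map and $H^n_c(\R^n)$ is the same computation phrased in compactly supported cohomology. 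What the paper's choice buys is self-containedness (no appeal to transversality or to the degree formula for proper maps); what yours buys is that the conceptual content --- the integral only sees the cohomology class of $\omega$ and the homotopy class of $f|_{\partial B_\delta(0)}$ --- is laid bare. Your step (1), that $\mathrm{supp}(f^*(\omega))\subset f^{-1}(B_\epsilon(0))\subset B_\delta(0)$, is exactly the right observation and is also implicit in the paper's use of $\mathrm{supp}(\tau)\subset\R^n\times B_{\epsilon/2}(0)$.

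One small caution: the hypothesis $f^{-1}(B_\epsilon(0))\subset B_\delta(0)$ does \emph{not} by itself imply that $0$ is the only zero of $f$ in $B_\delta(0)$, so your opening claim that the degree may be computed on any sphere of radius $r<\delta$ is not literally justified from the stated hypotheses. The paper sidesteps this by taking $\ind_0(f)$ to \emph{be} the degree of $q\circ p\circ f$ on $\partial B_\delta(0)$ (which is well defined since $|f|\ge\epsilon$ there); with that reading your argument goes through verbatim, and in the application (Proposition \ref{local-to-global}) tameness guarantees uniqueness of the zero anyway. You should fix the radius at $\delta$ rather than shrinking it, or add the uniqueness of the zero as a standing assumption.
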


\begin{proof}
  Let $B$ denote the closure of $B_\delta(0)$. Then by definition, $\mathrm{ind}_0(f)$ is the mapping degree of the composite
  \[
    \partial B\xrightarrow{f}\R^n\times(\R^n\setminus\{0\})\xrightarrow{p}\R^n\setminus\{0\}\xrightarrow{q}S^{n-1}
  \]
  where $p\colon T\R^n=\R^n\times\R^n\to\R^n$ denotes the second projection and $q\colon\R^n\setminus\{0\}\to S^{n-1}$ denotes the natural projection onto the unit sphere. There is a function $\rho\colon[0,\infty)\to\R$ such that $\rho(x)=0$ for $x$ sufficiently close to $0$ and $\rho(x)=1$ for $x\ge\epsilon/2$. Let $\alpha$ be an $(n-1)$-form on the unit sphere $S^{n-1}$ of $\R^n$ such that $\int_{S^{n-1}}\alpha=1$. Now we define
  \[
    \eta=p^*(d\rho\wedge q^*(\alpha)).
  \]
  By definition, we have $\mathrm{supp}(\eta)\subset\R^n\times B_{\epsilon/2}(0)$, and since $\int_{\R^n}d\rho\wedge q^*(\alpha)=1$, $\eta$ represents the Thom class of $T\R^n$. Then by the uniqueness of the Thom class, there is an $(n-1)$-form $\tau$ on $T\R^n$ with vertically compact support such that
  \[
    \omega-\eta=d\tau.
  \]
  We have $\mathrm{supp}(\tau)\subset\R^n\times B_{\epsilon/2}(0)$, implying $f^*(\tau)\vert_{\partial B}=0$. So by Stokes' theorem, we get
  \[
    \int_Bf^*(\omega)-\int_Bf^*(\eta)=\int_Bf^*(d\tau)=\int_Bdf^*(\tau)=\int_{\partial B}f^*(\tau)=0.
  \]
  Note that $\eta=dp^*(\rho\cdot q^*(\alpha))$ and $\rho(f(\partial B))=1$. Then by Stokes' theorem, we have
  \[
    \int_Bf^*(\eta)=\int_Bdf^*\circ p^*(\rho\cdot q^*(\alpha))=\int_{\partial B}f^*\circ p^*(\rho\cdot q^*(\alpha))=\int_{\partial B}f^*\circ p^*\circ q^*(\alpha)=\mathrm{ind}_0(f).
  \]
  Thus since $\int_{B_\epsilon(0)}f^*(\omega)=\int_Bf^*(\omega)$, the proof is finished.
\end{proof}


We compute the index of a strongly tame bounded vector field.

\begin{proposition}
  \label{local-to-global}
  Let $v$ be a strongly tame bounded vector field on $M$. Then
  \[
    \ind(v)(g)=\sum_{x\in\Zero(v)\cap gD}\ind_x(v).
  \]
\end{proposition}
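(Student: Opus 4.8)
The plan is to reduce the global index computation to the local index computation in Lemma \ref{local index} by choosing a convenient representative of the Thom class and exploiting strong tameness. First I would fix the constants $\delta,\epsilon>0$ from the definition of tame (Definition \ref{strongly tame}), shrinking $\delta$ if necessary so that in addition $B_\delta(x)\subset gD$ for the appropriate $g\in G$ whenever $x\in\Zero(v)$. Since $v$ is bounded, $|dv|$ is bounded, so $v^*(\pi^*(\Phi))$ is a bounded $n$-form and $\ind(v)=\int_M v^*(\pi^*(\Phi))$ is well defined; moreover by the remark preceding Proposition \ref{Euler} the index is independent of the representative $\Phi$ of the Thom class of $M/G$. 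So I would choose $\Phi$ with $\mathrm{supp}(\Phi)\subset T(M/G)|_{\ }$ contained in the $(\epsilon/2)$-disc bundle, i.e. $\mathrm{supp}(\Phi)$ meets the fiber over each point only in vectors of length $<\epsilon/2$; this is possible as in \cite{BT}. Then $\pi^*(\Phi)$ is supported in $B_{\epsilon/2}(M)\subset TM$, and consequently $v^*(\pi^*(\Phi))$ is supported in $v^{-1}(B_{\epsilon/2}(M))\subset v^{-1}(B_\epsilon(M))\subset\bigcup_{x\in\Zero(v)}B_\delta(x)$ by tameness.

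Next I would evaluate $\ind(v)(g)=\int_{gD}v^*(\pi^*(\Phi))$. Because the integrand is supported inside $\bigcup_{x\in\Zero(v)}B_\delta(x)$ and the $B_\delta(x)$ are pairwise disjoint, the integral over $gD$ splits as a finite sum over those zeros $x$ with $B_\delta(x)\cap gD\ne\emptyset$. By strong tameness, for each zero $x$ there is a unique $h\in G$ with $B_\delta(x)\subset hD$, and since the $\mathrm{Int}(hD)$ are disjoint, $B_\delta(x)$ meets $gD$ only when $h=g$ (here I would note that $x\in\Zero(v)\cap gD$ is equivalent to $B_\delta(x)\subset gD$ after the shrinking, up to the measure-zero boundary which does not affect integrals — more carefully, $x\in\mathrm{Int}(gD)$ iff $B_\delta(x)\subset gD$, and zeros on $\partial(gD)$ are shared between adjacent translates; one handles this by choosing the fundamental domain or perturbing so that no zero lies on $\partial D$, or by observing the sum over $\Zero(v)\cap gD$ is interpreted with the convention that each such zero is counted in exactly one translate). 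Thus
\[
  \ind(v)(g)=\sum_{x\in\Zero(v)\cap gD}\int_{B_\delta(x)}v^*(\pi^*(\Phi)).
\]

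For each zero $x$, I would pass to a local chart to apply Lemma \ref{local index}. Using $B_\delta(x)\subset gD$ and the fact that $\pi|_{\mathrm{Int}(gD)}$ is an orientation-preserving diffeomorphism onto its image, identify a neighborhood of $x$ with a neighborhood of $0$ in $\R^n$ via $\pi$ composed with a chart of $M/G$; under this identification $TM|_{B_\delta(x)}$ becomes $T\R^n$, the form $\pi^*(\Phi)$ becomes a representative $\omega$ of the Thom class of $T\R^n$ with $\mathrm{supp}(\omega)$ in the $(\epsilon/2)$-disc bundle, and $v$ becomes a section $f$ of $T\R^n$ with $f^{-1}(B_\epsilon(0))\subset B_\delta(0)$ (this last inclusion is exactly condition (2) of tameness localized at $x$, using that $B_\delta(x)\cap\Zero(v)=\{x\}$). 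Lemma \ref{local index} then gives $\int_{B_\delta(x)}v^*(\pi^*(\Phi))=\int_{B_\delta(0)}f^*(\omega)=\ind_0(f)=\ind_x(v)$, and summing over $x$ yields the claim.

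The main obstacle I anticipate is the bookkeeping at zeros lying on the boundaries of translates $gD$: the statement sums over $\Zero(v)\cap gD$, which as a closed set may assign a boundary zero to several translates, whereas the integral $\int_{gD}$ only sees each piece once. I would resolve this by first arguing that one may choose the fundamental domain (equivalently, perturb $v$ within its homotopy class using Lemma \ref{homotopy}, or slide the zeros slightly by an ambient isotopy with bounded derivative) so that $\Zero(v)\cap\partial(gD)=\emptyset$ for all $g$; strong tameness makes this harmless since each zero already sits in some $\mathrm{Int}(gD)$ up to the $\delta$-ball condition, and then $\Zero(v)\cap gD=\Zero(v)\cap\mathrm{Int}(gD)$ for every $g$ and the decomposition is a genuine partition of $\Zero(v)$. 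A secondary technical point is checking orientation compatibility — that the diffeomorphism $\pi|_{\mathrm{Int}(gD)}$ and the chart are orientation-preserving so the local index signs match — but this is routine given that $M/G$ and hence $D$ is oriented.
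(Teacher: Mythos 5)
Your proposal is correct and follows essentially the same route as the paper: choose $\Phi$ supported in the $(\epsilon/2)$-disc bundle so that $v^*(\pi^*(\Phi))$ is supported in the disjoint union of the balls $B_\delta(x)$, split the integral over $gD$ using strong tameness, and apply Lemma \ref{local index} in a local chart. The boundary bookkeeping you flag as the main obstacle is in fact automatic: since $B_\delta(x)$ is open, the condition $B_\delta(x)\subset gD$ in Definition \ref{strongly tame} already forces $x\in\mathrm{Int}(gD)$, so no zero lies on any $\partial(gD)$ and no perturbation is needed.
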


\begin{proof}
  Let $\delta$ and $\epsilon$ be as in the definition of a tame vector field. As in \cite{BT}, we may assume that the support of $\pi^*(\Phi)$ is contained in $B_{\epsilon/2}(M)$. Then we have
  \[
    \ind(v)(g)=\sum_{x\in\Zero(v)\cap gD}\int_{B_\delta(x)}v^*(\pi^*(\Phi))
  \]
  for each $g\in G$. On the other hand, by tameness of $v$, $\pi^*(\Phi)\vert_{B_\delta(x)}$ is compactly supported for $x\in\mathrm{Zero}(v)$, and so by Lemma \ref{local index}, we get
  \[
    \int_{B_\delta(x)}v^*(\pi^*(\Phi))=\ind_x(v)
  \]
  for each $x\in\Zero(v)$. Thus the statement is proved.
\end{proof}

Now we are ready to prove Theorem \ref{Poincare-Hopf}.

\begin{proof}
  [Proof of Theorem \ref{Poincare-Hopf}]
  Combine Propositions \ref{Euler}, \ref{index} and \ref{local-to-global}.
\end{proof}

\begin{proof}
  [Proof of Theorem \ref{zeros}]
  As mentioned at the beginning of Section \ref{Fundamental domain}, $G$ is finitely generated. Then we can apply the results in Section \ref{Module of coinvariants}. Let $v$ be a tame bounded vector field on $M$, and suppose that $v$ has finitely many zeros. We can easily see that $v$ is homotopic to a strongly tame vector field by a homotopy with bounded derivative. Then by Lemma \ref{homotopy}, we may assume that $v$ itself is strongly tame, and so by Theorem \ref{Poincare-Hopf}, we have
  \[
    \ind(v)=\chi(M/G)\mathbbm{1}.
  \]
  Since $\chi(M/G)\ne 0$, it follows from Proposition \ref{amenable} that $\chi(M/G)\mathbbm{1}$ is non-zero in $\ell^\infty(G)_G$. Then by Proposition \ref{finite non-zero}, we obtain that $v$ must have infinitely many zeros, a contradiction. Thus $v$ must have infinitely many zeros.
\end{proof}

\begin{proof}
  [Proof of Corollary \ref{diffeo}]
  Observe that a tame diffeomorphism is the composite of a tame vector field and the exponential map if it is $C^1$ close to the identity map. Then the statement follows from Theorem \ref{zeros}.
\end{proof}

\end{document}